\documentclass[10pt,twoside]{article}


\usepackage{amsmath}
\usepackage{amssymb}
\usepackage{amsthm}
\usepackage[latin1]{inputenc}
\usepackage{eurosym}
\usepackage[dvips]{graphics}
\usepackage{graphicx}
\usepackage{epsfig}
\usepackage{hyperref}
\usepackage{ifthen}
\usepackage{setspace}
\usepackage[active]{srcltx}
\usepackage{pdfsync}
\usepackage{color}

\newcommand{\Rr}{{\mathbb{R}}}

\newcommand{\D}[1]{\mbox{\rm #1}}
\newcommand{\dd}{\D{d}}


\newtheorem{theorem}{Theorem}[section]

\newtheorem{rem}[theorem]{\sc Remark}
    \newenvironment{remark}{\begin{rem} \begin{rm}}{\end{rm} \qed\end{rem}}
\newtheorem{lemma}[theorem]{Lemma}

 \newtheorem{proposition}[theorem]{Proposition}

\newtheorem{definition}[theorem]{Definition}

\theoremstyle{definition}

\numberwithin{equation}{section}

\makeatletter
\oddsidemargin.9375in
\evensidemargin \oddsidemargin
\marginparwidth1.9375in
\makeatother

\textwidth4.6in
\topmargin.0in
\textheight7.5in
\pagestyle{myheadings}


\markboth{$~$ \hfill {\rm F. Punzo and G. Terrone} \hfill $~$} {$~$
\hfill {\rm On a fractional sublinear elliptic equation with a variable coefficient } \hfill$~$}

\begin{document}
\thispagestyle{empty}
\setcounter{page}{1}



\begin{center}
{\large\bf \uppercase{On a fractional sublinear elliptic equation with a variable coefficient}}

\vskip.20in

Fabio Punzo \\[2mm]
{\footnotesize
Dipartimento di Matematica "G. Castelnuovo", \\
Universit\`{a} di Roma La Sapienza,\\
Piazzale Aldo Moro 5, 00185 Roma \\
punzo@mat.uniroma1.it}\\[2mm]
Gabriele Terrone\footnote{
Gabriele Terrone was supported by the UTAustin-Portugal partnership through the FCT post-doctoral fellowship
SFRH/BPD/40338/2007, CAMGSD-LARSys through FCT Program POCTI -
FEDER and by grants PTDC/MAT/114397/2009,
UTAustin/MAT/0057/2008, and UTA-CMU/MAT/0007/2009.
} \\[2mm]
{\footnotesize
Center for Mathematical Analysis, Geometry, and Dynamical Systems,\\
Departamento de Matem\'{a}tica, \\
Instituto Superior T\'{e}cnico, Avenida Rovisco Pais 1, 1049-001 Lisboa, Portugal\\

gterrone@math.ist.utl.pt}
\end{center}

{\footnotesize
\noindent
{\bf Abstract.}
We study existence and uniqueness of bounded solutions to a fractional sublinear elliptic equation with a variable coefficient, in the whole space. Existence is investigated in connection to a certain fractional linear equation, whereas the proof of uniqueness relies on uniqueness of solutions to an associated fractional porous medium equation with variable density.
\\[3pt]
\\ {\small\bf AMS subject classification:}  35A01, 35A02, 35J61, 35K65, 35R11\,.
\\ {\small\bf Keywords:} Fractional Laplacian, sublinear elliptic equations, Riesz potential, well-posedness, porous medium equation. }


\vskip.2in


\section{Introduction}
We are concerned with existence and uniqueness of
bounded solutions to the following fractional sublinear equation:
\begin{equation}
\label{19021}
(-\Delta)^\frac{\sigma}{2}u = \rho\, u^\alpha \qquad\text{in }\Rr^N.
\end{equation}
The nonlocal operator $(- \Delta)^{\frac{\sigma}{2}}$ is the fractional Laplacian of order $\sigma/2$ with $\sigma\in (0,2), N\geq 2$. Thus the following representation in terms of a singular integral holds:
\begin{equation}
\label{ea1}
(-\Delta)^{\sigma/2} g(x)=C_{N,\sigma}\,\, \textrm{P.V.}\, \int_{\Rr^N} \frac{g(x)-g(z)}{|x-z|^{N+\sigma}}\dd z,
\end{equation}
for any $g$ belonging to the Schwartz class, where $C_{N,\sigma}$ is an appropriate positive normalization constant depending on $N$ and $\sigma$ (see \eqref{20031}). The function $\rho$ is nonnegative and bounded in $\Rr^N$, and decays at infinity fast enough; furthermore, $0<\alpha<1$.
If we replace the nonlocal operator in \eqref{19021} by the Laplace operator $\Delta$, then we obtain the following sublinear elliptic equation:
\begin{equation}
\label{18031}
-\Delta u = \rho\, u^\alpha \quad\text{in }\Rr^N\,,
\end{equation}
which, together with its counterpart in bounded domains of $\Rr^N$ completed with Dirichlet boundary conditions, has been extensively studied in the literature (see \cite{BK}, \cite{BO}, \cite{Ed}, \cite{Egn}, \cite{Kras}, \cite{Nai}, \cite{Rad}). In particular, in \cite{BK},  existence and uniqueness of bounded solutions to equation \eqref{18031} have been established, under the assumption $\rho\in L^\infty_{loc}(\Rr^N)$, $\rho\geq 0$. More precisely, in \cite{BK} it has been shown that existence of solutions to problem \eqref{18031} holds if and only if the linear problem
\begin{equation}
\label{18032}
-\Delta U = \rho \qquad\text{in }\Rr^N
\end{equation}
admits a bounded solution; moreover, the solution is unique in the class of solutions $v$ satisfying $\liminf_{|x|\to \infty} v(x)=0$\,. Whereas, asymptotic behavior as $|x|\to\infty$ of solutions to equation \eqref{18031} has been addressed in \cite{Ed}, \cite{Egn} and \cite{Nai}, under appropriate assumptions on $\rho$.

Recently, also the analysis of fractional semilinear elliptic equations have attracted the attention of various authors (see, $e.g.$,  \cite{BCdPS}, \cite{BCdP}, \cite{CT}, \cite{CS1}, \cite{CS2}, \cite{CDDS}, \cite{Tan}). In particular, for further references we point out that in \cite{BCdPS}, \cite{BCdP} existence and multiplicity of solutions  have been studied for the problem
\begin{equation*}
    \begin{cases}
   (-\Delta)^\frac{\sigma}{2}u =  u^p + \lambda  u^q  & x\in D \\
     u=0 & x\in\partial D,
    \end{cases}
\end{equation*}
where $D\subset \Rr^N$ is a bounded domain with smooth boundary
$\partial D$, $0<q\leq 1, 1<p\leq \frac{N+\sigma}{N-\sigma},
N>\sigma,\lambda>0$. To the best of our knowledge,  fractional
sublinear equations in the all $\Rr^N$, such as \eqref{19021},
have not been studied so far. Clearly, as well as in the local
case (that is, for $\sigma=2$), when considering this type of
equations in the all $\Rr^N$ several differences with respect to
the case of bounded domains occur.

The analysis of the elliptic equation \eqref{18031} is strictly related (see \cite{KRV}) to the asymptotic behavior of solutions of the Cauchy problem for the porous medium equation with variable density $\rho$:
\begin{equation}
\label{06111ii}
    \begin{cases}
   \rho\, \partial_t u = \Delta \left[ u^m\right]   & x\in \Rr^N, \quad t>0\\
     u=u_0 & x\in \Rr^N, \quad t=0
    \end{cases}
\end{equation}
with $m=1/\alpha, \rho>0$.
The question if analogous results hold for problem
\begin{equation}
\label{06111i}
    \begin{cases}
   \rho\, \partial_t u + (- \Delta)^{\frac{\sigma}{2}}\left[ u^m\right] = 0  & x\in \Rr^N, \quad t>0\\
     u=u_0 & x\in \Rr^N, \quad t=0
    \end{cases}
\end{equation}
is not the content of the present work, and still remains to be answered.

On the other hand, let us mention that in the following, we shall use existence and uniqueness results proved in \cite{PT2} for problem \eqref{06111i} (see also \cite{PT1}), in order to show uniqueness of solutions to \eqref{19021}.

\smallskip

We describe next how the paper is organized and outline the main contributions. In Section \ref{sec:back} we recall the
needed mathematical background about the fractional Laplacian, its realization through the harmonic extension, both in bounded domains and in the whole space, and give the precise notion of solution we will deal with. As well as in \cite{BCdPS}, \cite{BCdP} we will consider energy solutions. Consequently, we need to suppose that $N>2\sigma$ to prove some results (see Remark \ref{vw}). In Section \ref{sec:linp} we perform a detailed and self-contained analysis of the linear problem
\begin{equation}
\label{19023i}
(-\Delta)^\frac{\sigma}{2}U = \rho \qquad\text{in }\Rr^N,
\end{equation}
establishing existence and uniqueness of solutions. Boundedness of solutions and behavior at infinity of solutions is investigated as well; in particular a decay estimate at infinity
is shown using some results in \cite{Rubin}.
In Section \ref{sec:exi} we study existence of solutions to equation \eqref{19021}. More precisely, we prove that existence of bounded solutions to the linear equation
\eqref{19023i} is sufficient (see Theorem \ref{190217})
to existence of solutions to \eqref{19021}. This somehow rephrases, in the nonlocal framework, some results obtained in \cite{BK} for the local problem \eqref{18031}. Finally in Section \ref{sec:uni}, by exploiting uniqueness results for problem \eqref{06111i} proved in \cite{PT2}, we show uniqueness of solutions of  \eqref{18031} satisfying a decay condition at infinity; see Theorem \ref{02034}.

\section{Mathematical background}
\label{sec:back}
We always make the following assumption:
\begin{equation}
\label{A0} \tag{{\bf A}$_0$}
\begin{cases}
\text{(i)} &\rho\in L^\infty(\Rr^N), \,\rho\geq0 \text{ a.e. in } \Rr^N, \, \rho \not\equiv 0\\
\text{(ii)} &0<\sigma <2\\
\text{(iii)} & 0<\alpha<1.
\end{cases}
\end{equation}
Furthermore, about $\rho=\rho(x)$, we suppose that the following decay condition at infinity holds:
\begin{equation}
\label{A2b} \tag{{\bf A}$_1$}
\begin{array}{c}
  \textrm{there exist}\; \check C>0, \check R>0 \; \textrm{and}\; \beta\in (N, \infty) \; \textrm{such that}\\
  \rho(x)\leq \check C  |x|^{- \beta}\quad \textrm{for all}\;\, x\in \Rr^N\backslash B_{\check R}.
\end{array}
\end{equation}

Let us introduce the following sets:
\begin{gather*}
L^1_\rho(\Rr^N):= \left\{ f \text{ measurable in }\Rr^N\, \Big{|}\, \|f\|_{L^1_{\rho}(\Rr^N)}:=\int_{\Rr^N} f \, \rho\, \dd x <\infty\right\},\\
L^+_\rho(\Rr^N):= \left\{ f\in L^1_\rho(\Rr^N)\, \Big{|}\, f\geq 0\right\}.\\
\end{gather*}
The fractional Laplace operator $(-\Delta)^{\sigma/2}$ can be defined in many different ways, one of which relies on the Fourier transform. For any $g$ in the class of Schwartz functions, if $(-\Delta)^{\sigma/2} g = h$ then
\begin{equation}
\label{02015}
\hat{h}(\xi)= |\xi|^{\sigma}\hat{g}(\xi).
\end{equation}
When, as in the present situation, $\sigma$ varies in the open interval $(0,2)$ then representation \eqref{ea1} holds.


\subsection{Problem in the all space}\label{pas}
Multiplying the nonlocal partial differential equation in \eqref{19021} by a test function $\varphi$ compactly supported in $\Rr^N$, integrating by parts, taking into account \eqref{02015} and using the Plancherel's Theorem, we discover that
\begin{equation}
\label{02016ter}
\int_{\Rr^N} \rho\, u^\alpha \varphi\, \dd x - \int_{\Rr^N}(-\Delta)^{\sigma/4}u\,
(-\Delta)^{\sigma/4}\varphi \, \dd x = 0.
\end{equation}

We denote by $\dot{H}^{\sigma/2}(\Rr^N)$ the fractional Sobolev space obtained by completing $C^\infty_0(\Rr^N)$ with the norm
$\|\psi\|_{\dot{H}^{\sigma/2}}= \|(-\Delta)^{\sigma/4} \psi\|_{L^2(\Rr^N)}$.

\begin{definition}\label{defsol01}
A \emph{solution} to equation \eqref{19021} is a function $u\geq 0$ such that:
\begin{itemize}
\item{} $u\in \dot{H}^{\sigma/2}(\Rr^N)\cap L^{\infty}(\Rr^N)\,$;
\item for any $\varphi\in C^{\infty}_0(\Rr^N)$ identity \eqref{02016ter} holds. \end{itemize}
\end{definition}

\subsection{Problem in bounded domains}
\label{subs_pbd}
Let $D$ be a bounded domain in $\Rr^N$ with smooth boundary $\partial D$. We use next a spectral decomposition to define the fractional operator $(-\Delta)^{\sigma/2}$ in $D$. Let $\{\xi_n\}_1^\infty$  be an orthonormal basis of $L^2(D)$  made by eigenfunctions of $-\Delta$ in $D$ completed with homogeneous Dirichlet boundary conditions, and let $\{\lambda_n\}_1^\infty$
be the sequence of the corresponding eigenvalues. For any $u\in C^{\infty}_0(D)$
\[ (-\Delta)^{\sigma/2}u := \sum_{n=1}^\infty \lambda_n^{\sigma/2} \, u_n \, \xi_n\quad \textrm{in } D\,, \]
where $u=\sum_{n=1}^\infty u_n\,  \xi_n$ in $L^2(D).$
By density, $(-\Delta)^{\sigma/2} u$ can be also defined for $u$ belonging to the Hilbert space
\[
H_0^{\sigma/2}(D):= \left\{u\in L^2(D)\,\,\Big{|}\,\, \|u\|^2_{H_0^{\sigma/2}(D)}:=\sum_{n=1}^\infty \lambda_n^{\sigma/2}u_n^2 <\infty \right\}.
\]

We then consider the problem
\begin{equation}
\label{19028}
\begin{cases}
(-\Delta)^\frac{\sigma}{2}u = \rho \, u^\alpha &\text{in } D\\
u=0 & \text{in }\partial D.
\end{cases}
\end{equation}

\begin{definition}\label{defsol02}
A \emph{solution} to problem \eqref{19028} is a function $u\geq 0$ such that:
\begin{itemize}
\item{} $u\in H_0^{\sigma/2}(D)\cap L^\infty(D)$;
\item for any $\varphi\in C^{\infty}_0(D)$ identity \eqref{02016ter} holds with $\Rr^N$ replaced by $D$.
\end{itemize}
\end{definition}

\section{The linear problem}
\label{sec:linp}
In this Section we study the linear problem
\begin{equation}
\label{19023}
(-\Delta)^\frac{\sigma}{2}U = \rho \qquad\text{in }\Rr^N.
\end{equation}

\begin{definition}\label{defsol04}
A \emph{solution} to problem \eqref{19023} is a function $U$
such that:
\begin{itemize}
\item{} $U\in \dot{H}^{\sigma/2}(\Rr^N)\cap L^\infty(\Rr^N)$;
\item for any $\varphi\in C^{\infty}_0(\Rr^N)$ there holds
\begin{equation}
\label{02016bis}
\int_{\Rr^N} \rho\, \varphi\, \dd x-  \int_{\Rr^N}(-\Delta)^{\sigma/4}U\,
(-\Delta)^{\sigma/4}\varphi \, \dd x = 0.
\end{equation}
\end{itemize}
\end{definition}

%

We also introduce the linear problem in a bounded domain $D \subset\Rr^N$,
\begin{equation}
\label{26026}
\begin{cases}
(-\Delta)^\frac{\sigma}{2}U = \rho &\text{in } D\\
U=0 & \text{in }\partial D.
\end{cases}
\end{equation}

\begin{definition}\label{defsol06}
A \emph{solution} to problem \eqref{26026} is a function $U$ such that:
\begin{itemize}
\item{} $u\in {H}_0^{\sigma/2}(D)\cap L^{\infty}(D)$;
\item for any $\varphi\in C^{\infty}_0(D)$ identity \eqref{02016bis} holds with $\Rr^N$ replaced by $D$.
\end{itemize}
\end{definition}

%
%

We introduce the following property:
\begin{equation}
\label{H1}
\tag{{\bf {H}}}
  \textrm{there exits a solution $U$ to
\eqref{19023}.}
\end{equation}

%

\subsection{Existence of solutions to the linear problem}
Let
\[
K^\sigma(x):= C_{N,\sigma}\frac 1{|x|^{N-\sigma}}\quad (x\in \Rr^N\setminus\{0\})
\]
be the Riesz kernel, where
\begin{equation}\label{20031}
C_{N,\sigma}:=2^{\sigma-1}\sigma\frac{\Gamma\big((N+\sigma)/2\big)}{\pi^{N/2}\Gamma\big((1-\sigma)/2\big)}\,.
\end{equation}
As well as for the standard Laplace operator, a solution to problem \eqref{19023} can be constructed by convolving such a kernel with the function $\rho$, that is
\[
(K^\sigma * \rho) (x) = C_{N,\sigma} \int_{\Rr^N} \frac{\rho(y)}{|x-y|^{N-\sigma}}\dd y\,.
\]

We are interested in determining conditions to be imposed on $\rho$ such that property \eqref{H1} is satisfied.

\begin{remark}
\label{remST}
\begin{itemize}
\item[(i)] It is direct to check that  (see, $e.g.$, \cite{ST})  $\big(K^\sigma * \rho\big)(x)$ is finite at any $x\in \Rr^N$ if and only if
\[
\int_{\Rr^N} \frac{|\rho(y)|}{1+|y|^{N-\sigma}}\dd y <\infty \quad \text{ and }\quad
\int_{B(x, 1)} \frac{|\rho(y)|}{|x-y|^{N-\sigma}}\dd y \text{ for any }x\in \Rr^N.
\]

\item[(ii)] As a consequence of (i), when $\rho\in L^{\infty}_{loc}(\Rr^N)$,
$\big(K^\sigma * \rho\big)(x)$ is finite at any $x\in \Rr^N$ if and only if it is finite at some $x_0\in \Rr^N\,.$

\item[(iii)]  From (i) it immediately follows that if $\rho\in L^{\infty}(\Rr^N)$ and $\big(K^\sigma * \rho\big)(0)<\infty$, then
$K^\sigma * \rho\in L^\infty(\Rr^N)$. In fact, in this case
\[
\int_{B(x, 1)} \frac{|\rho(y)|}{|x-y|^{N-\sigma}}\dd y \leq \|\rho\|_{L^\infty(\Rr^N)} \int_{B(0,1)} \frac{\dd z}{|z|^{N-\sigma}},
\]
for any $x\in \Rr^N$.
\item[(iv)] From $(iii)$ it follows that when \eqref{A0} and
\eqref{A2b} are satisfied, then $K^\sigma *\rho\in
L^\infty(\Rr^N)$.
\end{itemize}
\end{remark}
It is direct to obtain the next
\begin{proposition}
\label{27021} Assume \eqref{A0}--(i),(ii). If
\begin{equation}
\label{eea1} K^\sigma * \rho\in L^\infty(\Rr^N)\cap \dot
H^{\sigma/2}(\Rr^N)\,,
\end{equation}
then $K^\sigma * \rho$ solves equation \eqref{19023}. Thus,  property \eqref{H1} is satisfied.
\end{proposition}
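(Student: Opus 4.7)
Since $U:=K^\sigma*\rho$ already lies in $L^\infty(\Rr^N)\cap\dot H^{\sigma/2}(\Rr^N)$ by hypothesis \eqref{eea1}, the first bullet of Definition \ref{defsol04} is automatically satisfied, and only the weak identity \eqref{02016bis} needs to be verified for an arbitrary test function $\varphi\in C^\infty_0(\Rr^N)$.

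The plan is to reduce \eqref{02016bis} to Plancherel's theorem together with the fact that the normalization constant \eqref{20031} is chosen precisely so that $K^\sigma$ is the fundamental solution of $(-\Delta)^{\sigma/2}$, equivalently $\widehat{K^\sigma}(\xi)=|\xi|^{-\sigma}$ as tempered distributions. Since $\rho\in L^\infty(\Rr^N)$ defines a tempered distribution and $U=K^\sigma*\rho$ is a pointwise, absolutely convergent integral (the nonnegativity of $\rho$ together with $K^\sigma*\rho\in L^\infty$ makes $K^\sigma*|\rho|=K^\sigma*\rho$ locally integrable), the convolution theorem yields the distributional identity
\[
|\xi|^{\sigma}\hat U(\xi)\;=\;|\xi|^\sigma\cdot|\xi|^{-\sigma}\hat\rho(\xi)\;=\;\hat\rho(\xi).
\]

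Granted this, fix $\varphi\in C^\infty_0(\Rr^N)$. Since $U\in\dot H^{\sigma/2}$ one has $(-\Delta)^{\sigma/4}U\in L^2(\Rr^N)$, and of course $(-\Delta)^{\sigma/4}\varphi\in L^2(\Rr^N)$, so Plancherel gives
\[
\int_{\Rr^N}(-\Delta)^{\sigma/4}U\,(-\Delta)^{\sigma/4}\varphi\,\dd x\;=\;\int_{\Rr^N}|\xi|^{\sigma}\hat U(\xi)\,\overline{\hat\varphi(\xi)}\,\dd\xi\;=\;\int_{\Rr^N}\hat\rho(\xi)\,\overline{\hat\varphi(\xi)}\,\dd\xi,
\]
while the duality pairing of $\rho\in L^\infty$ with the Schwartz function $\varphi$ rewrites the last integral as $\int_{\Rr^N}\rho\,\varphi\,\dd x$. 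This chain of equalities is exactly \eqref{02016bis}, so $U$ solves \eqref{19023}.

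The main (really the only) technical subtlety is the distributional identity $\hat U=|\xi|^{-\sigma}\hat\rho$. The cleanest way to make this rigorous is via Fubini applied to $\langle U,\psi\rangle=\int\!\!\int K^\sigma(x-y)\rho(y)\psi(x)\,\dd y\,\dd x$ for a Schwartz test function $\psi$: the double integral converges absolutely because $\int K^\sigma(x-y)\rho(y)\,\dd y=(K^\sigma*\rho)(x)$ is bounded and $\psi\in L^1$, and swapping the order of integration gives $\langle U,\psi\rangle=\langle\rho,K^\sigma*\psi\rangle$. Transferring to the Fourier side then delivers the claimed identity.
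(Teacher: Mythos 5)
Your argument is correct and follows the standard route that the paper itself treats as immediate (the paper offers no proof beyond the remark ``it is direct to obtain''): identify $K^\sigma$ as the fundamental solution so that, by Fubini and duality, $\langle K^\sigma*\rho,\psi\rangle=\langle\rho,K^\sigma*\psi\rangle$, and then pass the weak identity \eqref{02016bis} through Plancherel using $U\in\dot H^{\sigma/2}(\Rr^N)$ to make sense of $(-\Delta)^{\sigma/4}U$ in $L^2$. The only points worth tightening are cosmetic: the product $|\xi|^{-\sigma}\hat\rho$ should be handled through the duality pairing you already set up (e.g.\ testing against $\psi=(-\Delta)^{\sigma/2}\varphi$, which is admissible in your Fubini step since it lies in $L^1$), rather than as a literal pointwise product of distributions, and the normalization of $C_{N,\sigma}$ in \eqref{20031} is the operator's constant rather than the Riesz potential's, an inconsistency inherited from the paper and not a defect of your proof.
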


Moreover, we have the following result.
\begin{proposition}
\label{oss1b} If \eqref{A0}-\eqref{A2b} are verified and
$N>2\sigma$, then condition \eqref{eea1} holds.
\end{proposition}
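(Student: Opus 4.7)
The plan is to verify the two parts of \eqref{eea1} separately. The $L^\infty(\Rr^N)$ statement is already contained in Remark~\ref{remST}(iv), so the real task is to show that $U:=K^\sigma*\rho$ lies in $\dot{H}^{\sigma/2}(\Rr^N)$, i.e.\ that $(-\Delta)^{\sigma/4}U\in L^2(\Rr^N)$.

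First I would reduce the seminorm to a Riesz energy via Fourier transform. Since $\widehat{K^\sigma}(\xi)=c_1|\xi|^{-\sigma}$, one has formally $\widehat U(\xi)=c_1|\xi|^{-\sigma}\widehat\rho(\xi)$, and Plancherel's theorem combined with \eqref{02015} gives
\[
\|(-\Delta)^{\sigma/4}U\|_{L^2(\Rr^N)}^2
= c_2\int_{\Rr^N}|\xi|^{-\sigma}|\widehat\rho(\xi)|^2\,\dd\xi
= c_3\iint_{\Rr^N\times\Rr^N}\frac{\rho(x)\,\rho(y)}{|x-y|^{N-\sigma}}\,\dd x\,\dd y,
\]
which is nothing but $c_3\int\rho\,U\,\dd x$.

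Second, I would control this Riesz energy by the Hardy--Littlewood--Sobolev inequality, which yields, for the symmetric exponent $p=\tfrac{2N}{N+\sigma}$,
\[
\iint_{\Rr^N\times\Rr^N}\frac{\rho(x)\,\rho(y)}{|x-y|^{N-\sigma}}\,\dd x\,\dd y\;\leq\;C\,\|\rho\|_{L^p(\Rr^N)}^2.
\]
The condition $1<p<\infty$ is guaranteed by $N>\sigma$, which is true under our standing assumptions (since $N\geq 2$ and $\sigma<2$, and indeed $N>2\sigma$). The verification that $\rho\in L^p(\Rr^N)$ is then immediate from the hypotheses: on $B_{\check R}$ the boundedness \eqref{A0}(i) suffices, while on $\Rr^N\setminus B_{\check R}$ the bound $\rho(x)\leq \check C|x|^{-\beta}$ from \eqref{A2b} produces an integrable tail since $\beta p\geq \beta>N$.

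The main obstacle is to make the Fourier computation rigorous, because $U$ need not lie in $L^2(\Rr^N)$ and hence Plancherel's formula does not apply directly. To circumvent this I expect to approximate $\rho$ by compactly supported mollifications $\rho_n\to\rho$, work with the smooth rapidly decreasing $U_n:=K^\sigma*\rho_n\in\mathcal{S}(\Rr^N)$ for which all Fourier manipulations are legitimate, deduce the identity and the HLS bound $\|(-\Delta)^{\sigma/4}U_n\|_{L^2}^2\leq C\|\rho_n\|_{L^p}^2$ uniformly in $n$, and finally pass to the limit in the definition of the $\dot H^{\sigma/2}$ completion using the uniform bound together with a monotone/dominated convergence argument in the double integral.
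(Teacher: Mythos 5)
Your argument is correct in substance but follows a genuinely different route from the paper's. The paper applies the Hardy--Littlewood--Sobolev inequality in its mapping form $\|K^\sigma*\rho\|_{L^{p^*}}\leq C_p\|\rho\|_{L^p}$ with $p^*=2$ (hence $p=\tfrac{2N}{N+2\sigma}$, which is where $N>2\sigma$ enters), concludes that both $K^\sigma*\rho$ and $\rho$ lie in $L^2(\Rr^N)$, and then invokes Proposition~3.1.7 and Theorem~1.1.1 of \cite{AH} to place $K^\sigma*\rho$ in $\dot H^{\sigma/2}(\Rr^N)$. You instead compute the $\dot H^{\sigma/2}$ seminorm directly as the Riesz energy $\iint\rho(x)\rho(y)|x-y|^{\sigma-N}\,\dd x\,\dd y=\int\rho\,(K^\sigma*\rho)\,\dd x$ and bound it by the bilinear HLS inequality with the symmetric exponent $p=\tfrac{2N}{N+\sigma}$. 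Your version is more self-contained (no appeal to the potential-theoretic machinery of \cite{AH}) and, as you observe, the energy bound itself only requires $N>\sigma$ rather than $N>2\sigma$; the paper's version is shorter on the page precisely because it delegates the identification of the energy space to the cited results. Two points in your regularization step deserve care. First, $U_n:=K^\sigma*\rho_n$ is \emph{not} in $\mathcal{S}(\Rr^N)$ even for $\rho_n\in C_0^\infty$: it decays only like $|x|^{-(N-\sigma)}$. This is harmless — either note that $U_n\in L^2(\Rr^N)$ when $N>2\sigma$, or better, carry out the whole Fourier computation on the $\rho_n$ side, where $\widehat{(-\Delta)^{\sigma/4}U_n}=c\,|\xi|^{-\sigma/2}\widehat{\rho_n}$ and Plancherel applies because the HLS bound makes this an $L^2$ function. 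Second, to conclude that $U$ belongs to the \emph{completion} of $C_0^\infty$ (as $\dot H^{\sigma/2}$ is defined in the paper) you should combine the uniform seminorm bound with convergence $U_n\to U$ in some ambient space (e.g.\ in $L^{2N/(N-\sigma)}$, again by HLS) and use weak compactness in the Hilbert space $\dot H^{\sigma/2}$, rather than a monotone convergence argument alone; with that adjustment the proof closes.
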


\noindent{\it Proof\,.\,\, } For any $p>1$, by the
Hardy-Littlewood-Sobolev inequality (see, $e.g.$, \cite{AH}),
\begin{equation}\label{er1}
\|K^\sigma* \rho\|_{L^{p^*}(\Rr^N)}\leq C_p
\|\rho\|_{L^p(\Rr^N)}\,,
\end{equation} where $p^*=\frac{Np}{N-\sigma p}$ and $C_p$ is a proper positive
constant. In view of $\eqref{A0}-\eqref{A2b}$, $\rho\in
L^p(\Rr^N)$ for any $p\in [1,\infty]$. This combined with
\eqref{er1} and the hypothesis $N>2\sigma$ implies that both
$K^\sigma* \rho$ and $\rho$ belong to $L^2(\Rr^N)$. Thus, from
Proposition 3.1.7 and Theorem 1.1.1 of \cite{AH} it is immediate
to deduce that $K^\sigma* \rho\in \dot H^{\sigma/2}(\Rr^N)$.
Hence, from Proposition \ref{27021} and Remark \ref{remST}-$(iv)$
the conclusion follows. \hfill $\square$

\medskip

From \cite{Rubin} the following lemma can be deduced (see \cite[Corollary 5.4]{PT2}).

\begin{lemma}\label{lemmaRub2}
Let $N\geq 2.$ Let assumptions \eqref{A0}, \eqref{A2b} be satisfied.  Then
\begin{equation}\label{ea58}
(K^\sigma * \rho) (x)  \,\to \,0\quad
\textrm{as}\;\; |x|\to \infty\,.
\end{equation}
More precisely, for some $C>0$, we have:
\begin{equation}\label{ea60}
(K^\sigma * \rho) (x) \leq C
|x|^{\sigma-\nu-\frac N r}\quad \textrm{for all}\;\; x\in \Rr^N,
\end{equation}
provided $\frac N2 (2-\sigma)<\nu<N$ and \begin{equation}\label{ea59}
\max\left\{\frac 2{\sigma},\frac N{\beta-\nu}\right\}<r<\frac
N{N-\nu}\,.
\end{equation}
\end{lemma}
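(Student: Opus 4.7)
The plan is to deduce the quantitative estimate \eqref{ea60} by a direct pointwise analysis of the Riesz potential, and then extract \eqref{ea58} as an immediate corollary, since the announced exponent $\sigma-\nu-N/r$ is strictly negative under the admissible parameter ranges. This is essentially the line of argument developed in Rubin's monograph and invoked wholesale in \cite[Corollary 5.4]{PT2}.

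To organise the estimate I would split
\[
(K^\sigma*\rho)(x)=C_{N,\sigma}\int_{\Rr^N}\frac{\rho(y)}{|x-y|^{N-\sigma}}\,\dd y
\]
over three regions: $\Omega_1=B(x,|x|/2)$, where the kernel is singular but \eqref{A2b} yields $\rho(y)\leq C|x|^{-\beta}$ because $|y|\geq|x|/2$; $\Omega_2=B(0,|x|/2)$, where the kernel is bounded by a multiple of $|x|^{\sigma-N}$ and $\rho\in L^1(\Rr^N)$ thanks to \eqref{A0} and \eqref{A2b}; and the complement $\Omega_3$, where both $|y|\geq|x|/2$ and $|x-y|\geq|x|/2$. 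On $\Omega_1$ the integral of the kernel on a ball of radius $|x|/2$ produces a factor $|x|^\sigma$, giving a contribution of order $|x|^{\sigma-\beta}$. On $\Omega_2$ the contribution is of order $|x|^{\sigma-N}\,\|\rho\|_{L^1}$. On $\Omega_3$ I would apply H\"older's inequality with dual exponents $r$ and $r'$: thanks to \eqref{A2b}, the $L^r$-norm of $\rho$ on $\{|y|\geq|x|/2\}$ is of order $|x|^{N/r-\beta}$ (requiring $\beta r>N$), while the $L^{r'}$-norm of $|x-\cdot|^{-(N-\sigma)}$ on $\{|x-y|\geq|x|/2\}$ is of order $|x|^{\sigma-N/r}$ (requiring $(N-\sigma)r'>N$, i.e.\ $r<N/\sigma$).

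The subtle point, and the reason Rubin's theory is invoked rather than this rough decomposition, is matching the claimed exponent $\sigma-\nu-N/r$ and the precise admissible ranges. One checks that $\nu>N(2-\sigma)/2$ is equivalent to $2/\sigma<N/(N-\nu)$, so the $r$-interval is non-empty above the $\dot H^{\sigma/2}$-threshold $2/\sigma$; likewise, $\beta>N$ from \eqref{A2b} translates into $N/(\beta-\nu)<N/(N-\nu)$, so the lower endpoint of the $r$-range sits below the upper one. Having verified these compatibility conditions, the pointwise decay estimates for Riesz potentials of functions obeying a polynomial decay bound, as developed in \cite{Rubin}, produce the inequality $(K^\sigma*\rho)(x)\leq C|x|^{\sigma-\nu-N/r}$, from which \eqref{ea58} follows by letting $|x|\to\infty$.

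The main obstacle I anticipate is not the decomposition itself but calibrating the Rubin-type interpolation so that the three partial bounds collapse to the single exponent $\sigma-\nu-N/r$; the $\Omega_2$ contribution in particular must be absorbed via a weighted reformulation (interpolating the $L^\infty$ bound on $\rho$ with its $L^1$ bound through the polynomial weight encoded in $\nu$), and this reweighting is exactly what Rubin's Chapter on potentials of decaying functions supplies.
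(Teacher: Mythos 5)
The paper does not actually prove this lemma: it is imported verbatim from \cite[Corollary 5.4]{PT2}, which in turn rests on Rubin's representation of Riesz potentials of decaying (radial) functions, so there is no in-paper argument to compare yours against. Judged on its own terms, your decomposition is the natural one, and the computations on $B(x,|x|/2)$ and on the exterior region are essentially fine (modulo a repairable detail: the H\"older step on the exterior region needs $(N-\sigma)r'>N$, i.e.\ $r<N/\sigma$, which \eqref{ea59} does not guarantee once $\nu>N-\sigma$). The genuine gap is exactly where you flag it and then stop: the contribution of $B(0,|x|/2)$, which is of order $|x|^{\sigma-N}\|\rho\|_{L^1(\Rr^N)}$. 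Your three bounds therefore collapse to $(K^\sigma*\rho)(x)\leq C(1+|x|)^{\sigma-N}$, while \eqref{ea59} forces $N/r>N-\nu$ and hence $\sigma-\nu-\frac Nr<\sigma-N$, so \eqref{ea60} asserts a strictly \emph{faster} decay than what you obtain. Deferring the ``absorption'' of this term to an unspecified weighted interpolation in \cite{Rubin} is not a proof; that absorption is the entire quantitative content of the lemma.

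Moreover, the absorption you propose cannot succeed by any rearrangement of the same ingredients: if $\rho\geq 0$ is bounded, compactly supported and not identically zero (which satisfies \eqref{A0} and \eqref{A2b} for every $\beta>N$), then for large $|x|$ one has $(K^\sigma*\rho)(x)\geq c\,|x|^{\sigma-N}\int_{\Rr^N}\rho\,\dd y$ with $c>0$, so no pointwise upper bound with exponent strictly below $\sigma-N$ can follow from \eqref{A0}--\eqref{A2b} alone. Consequently your argument, completed honestly, yields the bound $C(1+|x|)^{\sigma-N}$ --- which does give \eqref{ea58}, since $\sigma<2\leq N$ --- but it does not and cannot yield \eqref{ea60} with the exponent $\sigma-\nu-\frac Nr$ in the stated parameter range. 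To recover the lemma as written you must go back to the precise statement and hypotheses of \cite[Corollary 5.4]{PT2} and \cite{Rubin} rather than reconstructing them; as it stands, your proposal proves the qualitative claim but leaves the quantitative one unestablished.
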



For further references, let us consider problem
\begin{equation}
\label{260214}
\begin{cases}
(-\Delta)^\frac{\sigma}{2}U_R = \rho &\text{in } B_R\\
U_R=0 & \text{in }\partial B_R.
\end{cases}
\end{equation}
Suppose that \eqref{A0} is satisfied. Note that for each $R>0$
problem \eqref{260214} admits a unique solution $U_R$; moreover,
by strong maximum principle (see \cite{CDDS}), $U_R> 0$ in $B_R$,
and
\[
U_R(x)= \int_{B_R} G_R(x, y) \rho(y)\, \dd y
\]
where $G_R$ is the Green function of the operator $(-\Delta)^{\sigma/2}$ in the domain $B_R$, completed with zero Dirichlet boundary conditions on $\partial B_R$ (see, $e.g.$ \cite{dBV}).

Since $U_R\geq 0$ in $B_R$ for any $R>0$, if $R_1<R_2$, then $U_{R_2}$ is a supersolution to problem
\[
\begin{cases}
(-\Delta)^\frac{\sigma}{2} U = \rho &\text{in } B_{R_1}\\
U =0 & \text{in }\partial B_{R_1}.
\end{cases}
\]
So, by comparison principles,
\[ 0\leq U_{R_1}\leq U_{R_2}\quad \textrm{in}\;\; B_{R_1}\,. \]
By results in \cite{dBV}, $G_R(x, y)\leq \tilde C
K^\sigma(|x-y|)$, for some positive constant $\tilde C$
independent of $R$. So
\begin{equation}
\label{ed2} U_R(x) \leq \tilde C(K^\sigma * \rho) (x)\quad (x\in
\Rr^N)\,.
\end{equation}

\subsection{Uniqueness for the linear problem}
\begin{lemma}
\label{27026}
Let assumptions \eqref{A0}, \eqref{A2b} be satisfied; let $N>2\sigma$. If $U$  is a bounded solution to problem \eqref{19023}, such that $U(x)\to 0$ as $|x|\to \infty$, then it coincides with $K^\sigma * \rho$.
\end{lemma}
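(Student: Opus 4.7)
The plan is to run an energy argument on the difference $W := U - K^\sigma*\rho$. I would first observe that, by Proposition \ref{oss1b} (which applies since \eqref{A0}--\eqref{A2b} hold and $N>2\sigma$), $K^\sigma*\rho$ belongs to $\dot H^{\sigma/2}(\Rr^N)\cap L^\infty(\Rr^N)$, and by Lemma \ref{lemmaRub2} it decays to zero at infinity. Combined with the hypotheses on $U$, this gives $W\in \dot H^{\sigma/2}(\Rr^N)\cap L^\infty(\Rr^N)$, bounded, with $W(x)\to 0$ as $|x|\to\infty$.

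Next I would subtract the two weak formulations \eqref{02016bis} (one for $U$, the other for $K^\sigma*\rho$) to obtain
$$\int_{\Rr^N}(-\Delta)^{\sigma/4}W\,(-\Delta)^{\sigma/4}\varphi\,\dd x=0\qquad \text{for every }\varphi\in C^\infty_0(\Rr^N).$$
Since $\dot H^{\sigma/2}(\Rr^N)$ is by definition the completion of $C^\infty_0(\Rr^N)$ with respect to $\|\cdot\|_{\dot H^{\sigma/2}}=\|(-\Delta)^{\sigma/4}\cdot\|_{L^2}$, continuity of the bilinear form lets me extend the identity to every $\varphi\in \dot H^{\sigma/2}(\Rr^N)$. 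Choosing $\varphi=W$ then yields $\|(-\Delta)^{\sigma/4}W\|_{L^2(\Rr^N)}^2=0$.

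Finally, I would conclude that $W=0$ in $\dot H^{\sigma/2}(\Rr^N)$. Because $N>2\sigma$, the fractional Sobolev embedding $\dot H^{\sigma/2}(\Rr^N)\hookrightarrow L^{2N/(N-\sigma)}(\Rr^N)$ upgrades this to $W\equiv 0$ a.e., so $U=K^\sigma*\rho$. An alternative route: $(-\Delta)^{\sigma/4}W=0$ forces $\hat W$ to be supported at the origin, so $W$ is a polynomial, and boundedness together with decay at infinity collapses it to $W\equiv 0$. The only step that deserves caution is the density extension that allows $W$ to be used as its own test function; everything else amounts to routine manipulation, and the decay assumption on $U$ is only needed at the very end to rule out nonzero constant solutions.
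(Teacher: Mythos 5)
Your argument is correct, but it takes a genuinely different route from the paper. The paper passes to the Caffarelli--Silvestre $\sigma$-harmonic extension: it sets $\tilde W=\D{E}(U)-\D{E}(K^\sigma*\rho)$ in the half-space $\Omega=\Rr^{N+1}_+$, uses the same density trick you use (testing the weak formulation with a sequence $\varphi_n\to\tilde W$) to show that the weighted energy $\int_\Omega y^{1-\sigma}|\nabla\tilde W|^2$ vanishes, concludes that $\tilde W$ is \emph{constant}, and only then invokes the decay of $U$ and of $K^\sigma*\rho$ (Lemma \ref{lemmaRub2}) on the trace to force that constant to be zero. You instead stay on $\Rr^N$, subtract the two weak formulations, extend by density to test with $W=U-K^\sigma*\rho$ itself, and get $\|(-\Delta)^{\sigma/4}W\|_{L^2}=0$; the Sobolev embedding $\dot H^{\sigma/2}(\Rr^N)\hookrightarrow L^{2N/(N-\sigma)}(\Rr^N)$ then upgrades zero seminorm to $W\equiv 0$ a.e. Two small remarks. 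First, that embedding needs only $N>\sigma$ (i.e.\ $2s<N$ with $s=\sigma/2$), which is automatic here; the hypothesis $N>2\sigma$ is what guarantees, via Proposition \ref{oss1b}, that $K^\sigma*\rho$ is itself an element of $\dot H^{\sigma/2}(\Rr^N)$ and hence an admissible solution to compare with. Second, your route makes the decay hypothesis $U(x)\to 0$ logically superfluous: once $U$ is required to lie in the completion $\dot H^{\sigma/2}(\Rr^N)$, realized inside $L^{2N/(N-\sigma)}(\Rr^N)$, nonzero constants are already excluded, whereas in the paper's extension argument constants have zero energy and the decay at infinity is what kills them. Your approach is shorter and avoids the extension machinery; the paper's is more robust in that it does not rely on identifying the homogeneous space with a genuine function space. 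The density step you flag as delicate is handled identically in both proofs and is sound.
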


\begin{proof}
Set
\[
\Omega:= \Rr^{N+1}_+ = \{(x,y): \, x\in \Rr^N, \, y>0 \}, \qquad \Gamma:=\overline \Omega \cap \{y=0\}\equiv \Rr^N,
\]
and let $X^\sigma(\Omega)$ be the completion of $C^\infty_0(\bar\Omega)$ with the norm
\[
\|v\|_{X^\sigma(\Omega)} = \left(\mu_\sigma \int_{\Omega} y^{1-\sigma}|\nabla v|^2
\, \dd x \, \dd y \right)^\frac{1}{2},
\]
where $\mu_\sigma:=\frac{2^{\sigma-1}\Gamma(\sigma/2)}{\Gamma(1-\sigma/2)}$. Given a function $f\in X^\sigma(\Omega)$ we denote by $f |_{\Gamma}$ its trace on $\Gamma$.

Let  $W:=\D{E}(U)$ be the $\sigma$--harmonic extension of $U$ to the upper half-space $\Omega$
that is, the unique smooth and bounded solution $W(x,y)$ of the problem
\begin{equation}
\label{26022}
    \begin{cases}
    \D{div}\big\{y^{1-\sigma} \nabla W\big\} = 0 & (x,y)\in \Omega,\\
    \displaystyle \frac{\partial W}{\partial y^\sigma}= \rho & x\in \Gamma,
     \end{cases}
\end{equation}
where
\[
\frac{\partial W}{\partial y^\sigma}(x,0):=\mu_\sigma \lim_{y\to 0^+} y^{1-\sigma}\frac{\partial W}{\partial y}.
\]
A solution to problem \eqref{26022} is a
pair of functions $(U,W)$ such that $W\in X^\sigma( \Omega)\cap L^\infty(\Omega)$, $W|_{\Gamma}=U$ and for any $\varphi\in C^\infty_0(\bar\Omega)$ there holds
\begin{equation}
    \label{26024}
    \int_{ \Gamma} \rho\, \varphi(x,0) \,\dd x =  {\mu_\sigma}   \int_{\Omega} y^{1-\sigma}\langle\nabla\varphi,  \nabla W\rangle \,\dd x\, \dd y.
\end{equation}

Set also $\bar W:= \D{E}(K^\sigma * \rho)$ and $\tilde{W}:= W- \bar W$.
Take a sequence ${\varphi_n}\subset C^\infty_0\big(\bar \Omega\big)$ such that $\varphi_n\to \tilde W$ as $n\to \infty$ in $X^\sigma_0(\Omega)$.
By \eqref{26024}, for all $n\in \mathbb N$, we have
\[
\int_\Omega y^{1-\sigma}\langle\nabla \tilde{W}, \nabla \varphi_n \rangle \dd x\, \dd y = 0.
\]
Sending $n\to\infty$, we get
\[
\int_\Omega y^{1-\sigma}|\nabla \tilde{W}|^2 \dd x\, \dd y = 0,
\]
so $\tilde W$ is constant in $\Omega$. Furthermore,  in view of \eqref{ea58}, we have that $\bar W(x,0)=(K^\sigma * \rho)(x)\to 0$ as $|x|\to \infty$. Thus, taking into account that by assumption $W(x,0)=U(x)\to 0$ as $|x|\to\infty$, we deduce that $\tilde{W}(x,0)\to 0$ as $|x|\to \infty$. This implies the identity $\tilde{W}\equiv 0$ and thus the statement.
\end{proof}

\begin{lemma}
\label{lemma*} Let assumptions \eqref{A0}, \eqref{A2b} be satisfied; let $N>2\sigma.$
Let $U$ be a solution to
\begin{equation}
\label{27028}
(-\Delta)^{\sigma/2} U \leq \rho \qquad\text{in }\Rr^N
\end{equation}
such that $U(x)\to 0$ as $|x|\to\infty$.
In addition, suppose that $f:=(-\Delta)^{\sigma/2} U\in L^\infty(\Rr^N)$.
Then
\begin{equation}
\label{27029} U\leq K^\sigma*\rho \qquad \text{in }\Rr^N.
\end{equation}
\end{lemma}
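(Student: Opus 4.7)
The plan is to set $V:=K^\sigma * \rho - U$ and to establish $V\geq 0$ in $\Rr^N$ by a fractional maximum principle argument, extending the energy technique used in the proof of Lemma \ref{27026} by testing with the negative part instead of the full function.

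First, I would collect the basic properties of $V$. From Proposition \ref{oss1b} and Lemma \ref{lemmaRub2}, $K^\sigma *\rho$ belongs to $\dot H^{\sigma/2}(\Rr^N)\cap L^\infty(\Rr^N)$ and vanishes at infinity; together with the hypotheses on $U$ this gives $V\in \dot H^{\sigma/2}(\Rr^N)\cap L^\infty(\Rr^N)$ with $V(x)\to 0$ as $|x|\to\infty$. Subtracting the weak formulation of \eqref{19023} satisfied by $K^\sigma *\rho$ (Proposition \ref{27021}) from the weak identity $(-\Delta)^{\sigma/2}U=f$, we get
\[
\int_{\Rr^N}(-\Delta)^{\sigma/4}V\,(-\Delta)^{\sigma/4}\varphi\,\dd x = \int_{\Rr^N}(\rho-f)\,\varphi\,\dd x \qquad\forall\,\varphi\in C^{\infty}_0(\Rr^N),
\]
with $\rho-f\geq 0$ a.e. Passing to the $\sigma$-harmonic extension $W:=\D{E}(V)\in X^\sigma(\Omega)\cap L^\infty(\Omega)$ as in Lemma \ref{27026}, the identity \eqref{26024} becomes
\[
\mu_\sigma\int_\Omega y^{1-\sigma}\langle\nabla W,\nabla\varphi\rangle\,\dd x\,\dd y = \int_\Gamma (\rho-f)\,\varphi(x,0)\,\dd x, \qquad\forall\,\varphi\in C^{\infty}_0(\bar\Omega).
\]

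The crucial step is to test against the negative part $W_-:=\max\{-W,0\}\in X^\sigma(\Omega)$, whose trace on $\Gamma$ is $V_-:=\max\{-V,0\}$. Approximating $W_-$ by a sequence $\varphi_n\in C^{\infty}_0(\bar\Omega)$ in the $X^\sigma$-norm (exactly as in the proof of Lemma \ref{27026}), exploiting the identity $\nabla W\cdot\nabla W_- = -|\nabla W_-|^2$ a.e., and letting $n\to\infty$, one would obtain
\[
-\mu_\sigma\int_\Omega y^{1-\sigma}|\nabla W_-|^2\,\dd x\,\dd y = \int_\Gamma(\rho-f)\,V_-\,\dd x \geq 0.
\]
Since the left-hand side is non-positive and the right-hand side non-negative, both vanish, so $W_-$ is constant on $\Omega$; as $V_-(x)\to 0$ at infinity, this constant is zero, hence $W_-\equiv 0$, $V\geq 0$ on $\Gamma$, and the sought inequality $U\leq K^\sigma *\rho$ follows.

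The main obstacle I anticipate is the limit passage $\int_\Gamma (\rho-f)\varphi_n(x,0)\,\dd x\to\int_\Gamma(\rho-f)V_-\,\dd x$: since $f$ is only $L^\infty(\Rr^N)$, one lacks a clean $L^1$-bound on $f\cdot\varphi_n(\cdot,0)$ that would justify the limit by dominated convergence against a non-compactly-supported target. The natural workaround, in the spirit of Lemma \ref{27026}, is to rewrite the $f$-term using the weak identity for $U$, namely $\int f\,\varphi_n(\cdot,0)\,\dd x = \int(-\Delta)^{\sigma/4}U\,(-\Delta)^{\sigma/4}\varphi_n(\cdot,0)\,\dd x$, and similarly the $\rho$-term via $K^\sigma *\rho$, so that the passage to the limit is governed solely by the $\dot H^{\sigma/2}$-convergence of the traces $\varphi_n(\cdot,0)\to V_-$ and by Cauchy--Schwarz, both of which are automatic from the approximation of $W_-$ in $X^\sigma(\Omega)$.
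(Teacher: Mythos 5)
Your strategy---a truncation (Stampacchia) argument in the $\sigma$-harmonic extension to show $V:=K^\sigma*\rho-U\ge 0$---is genuinely different from the paper's proof, which instead sets $g:=(-\Delta)^{\sigma/2}(K^\sigma*\rho-U)=\rho-f\ge0$, shows that $K^\sigma*g$ is a bounded solution of $(-\Delta)^{\sigma/2}V=g$ vanishing at infinity, identifies $K^\sigma*\rho-U$ with $K^\sigma*g$ via the uniqueness Lemma \ref{27026}, and reads off the sign from the positivity of the Riesz kernel. Your route is viable in principle, but as written it has a genuine gap at exactly the step you flagged, and the workaround you propose does not close it. If you replace the boundary term $\int_\Gamma(\rho-f)\varphi_n(\cdot,0)\,\dd x$ by the energy pairing $\int_{\Rr^N}(-\Delta)^{\sigma/4}V\,(-\Delta)^{\sigma/4}\varphi_n(\cdot,0)\,\dd x$ and pass to the limit by Cauchy--Schwarz, what you obtain is
\[
-\mu_\sigma\int_\Omega y^{1-\sigma}|\nabla W_-|^2\,\dd x\,\dd y \;=\; \int_{\Rr^N}(-\Delta)^{\sigma/4}V\,(-\Delta)^{\sigma/4}V_-\,\dd x,
\]
and the right-hand side is not known to equal $\int_\Gamma(\rho-f)V_-\,\dd x$: recovering that equality requires pairing $g=\rho-f$, which is only in $L^\infty$ (possibly in no $L^p$ with $p<\infty$, since $f$ need not decay), against the non-compactly supported $V_-$---precisely the obstruction you started from. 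Worse, the right-hand side above is \emph{automatically nonpositive}: writing $V=V_+-V_-$ in the Gagliardo form, the cross term $\langle V_+,V_-\rangle_{\dot H^{\sigma/2}}$ is $\le 0$ because $V_+$ and $V_-$ have disjoint supports, so the displayed identity reduces to ``nonpositive $=$ nonpositive'' and yields no information on $W_-$.

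The gap is fixable, but by a different device: choose the approximating sequence \emph{nonnegative}, $\varphi_n\ge 0$ with $\varphi_n\to W_-$ in $X^\sigma(\Omega)$ (possible since $W_-\ge 0$, e.g.\ by replacing $\varphi_n$ with $(\varphi_n)_+$ or by mollifying and cutting off). Then each boundary term $\int_\Gamma(\rho-f)\varphi_n(\cdot,0)\,\dd x$ is nonnegative term by term, because $\rho-f\ge 0$ and $\varphi_n(\cdot,0)\ge 0$, while the volume term converges to $-\mu_\sigma\int_\Omega y^{1-\sigma}|\nabla W_-|^2\le 0$; since the two sides coincide for every $n$, their common limit is $0$, and you never need to pass to the limit in the boundary integral against $V_-$ at all. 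With that correction (plus the routine facts that $W_-\in X^\sigma(\Omega)$ with trace $V_-$, and that $V$, hence $V_-$, vanishes at infinity by Lemma \ref{lemmaRub2} and the hypothesis on $U$), your argument goes through and is arguably more self-contained than the paper's, which must first establish boundedness and decay of $K^\sigma*g$ for the merely bounded datum $g$ before Lemma \ref{27026} can be invoked.
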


\begin{proof}
Let $g:= (-\Delta)^{\sigma/2}  (K^\sigma*\rho- U)$. Thus
\begin{equation}\label{e7b}
g\geq 0 \qquad\text{in }  \Rr^N.
\end{equation}
Consider the equation
\begin{equation}\label{e6b}
(-\Delta)^{\sigma/2} V = g \qquad\text{in }\Rr^N\,.
\end{equation}
Note that $g=\rho-f.$ Thus, in view of hypothesis \eqref{A2b} and
the fact that $f\in L^\infty(\Rr^N)$, from Remark \ref{remST} we
can infer that $K^\sigma * g\in L^\infty(\Rr^N)$. Furthermore,
since $0\leq g \leq \rho$, and $\rho\in L^1(\Rr^N)$, it also
follows that $K^\sigma * g\in L^1_g(\Rr^N)$. So, by Proposition
\ref{27021}, $K^\sigma * g$ is a bounded solution to equation
\eqref{e6b}. Since $g\leq \rho$, from Lemma \ref{lemmaRub2} and
\eqref{e7b} it follows that $\big(K^\sigma * g\big)(x)\to 0$ as
$|x|\to\infty$. Clearly, $K^\sigma*\rho-U$ is a bounded solution
to equation \eqref{e6b} such that
$\Big[\big(K^\sigma*\rho\big)(x)-U(x)\Big]\to 0$ as
$|x|\to\infty$. From Lemma \ref{27026} we deduce that
\[ K^\sigma*\rho - U = K^\sigma * g \quad \textrm{in}\;\; \Rr^N\,. \]
Now, from \eqref{e7b} the conclusion follows.
\end{proof}

\begin{remark}\label{vw}
Note that since the proof of Lemma \ref{lemma*} uses $K^\sigma * \rho\in \dot H^{\sigma/2}(\Rr^N)$, we need the hypothesis $N>2\sigma$. 
We stress the fact that the content of such lemma will be important to prove Theorem \ref{tuniell}, which deals with $\rho\geq 0$. 
\end{remark}

\section{Existence results}
\label{sec:exi}
Our goal is to prove the following:
\begin{theorem}
\label{190217}
Let assumption \eqref{A0} be satisfied. If 
\eqref{eea1} holds and $K^\sigma * \rho\in L^1_{\rho}(\Rr^N)$,
then there exists a solution $u$ to problem \eqref{19021}.
Furthermore, for some $C>0$,
\begin{equation}\label{er10}
u(x)\leq C |x|^{\sigma-\nu-\frac N r}\quad \textrm{for all}\;\;
x\in \Rr^N\,,
\end{equation}
with $\nu, r$ as in Lemma \ref{lemmaRub2}.
\end{theorem}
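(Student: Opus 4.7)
The plan is to adapt the Brezis--Kamin monotone iteration scheme to the nonlocal setting, using the Riesz-potential solution of the linear problem as an a priori supersolution. Set $U := K^\sigma * \rho$ and $c := \|U\|_{L^\infty(\Rr^N)}^{\alpha/(1-\alpha)}$. Then $\bar u := cU$ is a pointwise supersolution of \eqref{19021}, since
$(-\Delta)^{\sigma/2}\bar u - \rho\,\bar u^\alpha = c^\alpha\rho\bigl(c^{1-\alpha}-U^\alpha\bigr) \geq c^\alpha\rho\bigl(\|U\|_\infty^\alpha-U^\alpha\bigr) \geq 0$
by the very choice of $c$, and hypothesis \eqref{eea1} ensures that $\bar u\in \dot H^{\sigma/2}(\Rr^N)\cap L^\infty(\Rr^N)$.

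I would then iterate downward by setting $u_0 := \bar u$ and $u_{n+1} := K^\sigma*(\rho u_n^\alpha)$. The key monotonicity estimate is $u_1\leq u_0$, which follows pointwise from
$u_1 \leq c^\alpha\|U\|_\infty^\alpha(K^\sigma*\rho) = c^\alpha\cdot c^{1-\alpha}\cdot U = u_0$;
induction using the monotonicity of $t\mapsto t^\alpha$ then propagates this to $0\leq u_{n+1}\leq u_n\leq \bar u$ for every $n$. Hence $u_n\searrow u$ pointwise, with $0\leq u\leq \bar u$, and each $u_{n+1}$ is by construction a bounded solution of the linear equation $(-\Delta)^{\sigma/2}u_{n+1}=\rho u_n^\alpha$ in $\Rr^N$.

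To check that $u$ is a solution in the sense of Definition \ref{defsol01}, I would invoke the Riesz-potential energy identity $\|K^\sigma*h\|_{\dot H^{\sigma/2}(\Rr^N)}^2 = \int_{\Rr^N} h\,(K^\sigma*h)\,\dd x$ with $h = \rho u_n^\alpha$, obtaining the uniform bound
$\|u_{n+1}\|_{\dot H^{\sigma/2}(\Rr^N)}^2 = \int_{\Rr^N}\rho\,u_n^\alpha\,u_{n+1}\,\dd x \leq c^{1+\alpha}\|U\|_{L^\infty}^\alpha \int_{\Rr^N} U\rho\,\dd x < \infty$,
which is finite precisely because $K^\sigma*\rho\in L^1_\rho(\Rr^N)$. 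Extracting a weak $\dot H^{\sigma/2}$-limit of $\{u_n\}$ and identifying it with the pointwise limit $u$ yields $u\in \dot H^{\sigma/2}(\Rr^N)\cap L^\infty(\Rr^N)$. Passing to the limit in the weak identity \eqref{02016ter} written for $u_{n+1}$ is then routine: dominated convergence, with envelope $c^\alpha\|U\|_\infty^\alpha\rho|\varphi|\in L^1(\Rr^N)$, handles the nonlinear term, while weak $\dot H^{\sigma/2}$-convergence paired with $(-\Delta)^{\sigma/4}\varphi\in L^2(\Rr^N)$ handles the bilinear term. The pointwise decay \eqref{er10} finally follows from $u\leq cU$ combined with Lemma \ref{lemmaRub2} (this is where \eqref{A2b} implicitly enters).

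The main technical subtlety I anticipate is the rigorous justification of the Riesz-potential energy identity used above -- morally the Plancherel content of the linear theory, but requiring an approximation argument because of the singular kernel -- together with the verification that the weak $\dot H^{\sigma/2}$-limit of the iterates may indeed be identified with their pointwise limit $u$. Once these two points are settled, the remainder of the argument is essentially routine.
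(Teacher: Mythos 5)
Your route is genuinely different from the paper's. The paper exhausts $\Rr^N$ by balls: it solves the Dirichlet problem \eqref{190214} on each $B_R$ variationally (Proposition \ref{190213}), gets strict positivity of $u_R$ from the strong maximum principle, monotonicity in $R$ from the comparison Lemma \ref{190215}, the uniform bound $u_R\leq C\,U_R\leq C\tilde C\,K^\sigma*\rho$ from \eqref{ed2}, and then lets $R\to\infty$; the energy bound $\int_{B_R}|(-\Delta)^{\sigma/4}u_R|^2\,\dd x=\int_{B_R}\rho\,u_R^{\alpha+1}\,\dd x\leq C$ is exactly the estimate you use, obtained there from the weak formulation on balls rather than from the Riesz--potential identity. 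Your global decreasing iteration $u_{n+1}=K^\sigma*(\rho\,u_n^\alpha)$ started from the supersolution $c\,K^\sigma*\rho$, the comparison $u_{n+1}\leq u_n$, the uniform $\dot H^{\sigma/2}$ bound via $K^\sigma*\rho\in L^1_\rho(\Rr^N)$, and the passage to the limit in \eqref{02016ter} are all in order, modulo the two technical points you flag yourself, which are indeed the right ones to worry about.

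The gap is positivity. A decreasing iteration from a supersolution, with no subsolution pinned underneath it, may a priori collapse to $u\equiv 0$, which satisfies Definition \ref{defsol01} and \eqref{er10} vacuously but is not the object the theorem is after: the solution built here is used in Section \ref{sec:uni} as the minimal bounded solution $\underline u$ with $\D{ess}\inf_{B_R}\underline{u}>0$. The paper gets positivity for free from the strong maximum principle on each ball together with the monotone increase in $R$; you must rule out the trivial limit separately. This is fixable by the classical sublinear lower-bound iteration: for $x$ in a fixed ball $B_{R_0}$ with $\int_{B_{R_0}}\rho\,\dd y>0$ one has $u_{n+1}(x)\geq C_{N,\sigma}(2R_0)^{\sigma-N}\bigl(\inf_{B_{R_0}}u_n\bigr)^\alpha\int_{B_{R_0}}\rho\,\dd y=\kappa\,m_n^\alpha$ with $\kappa>0$ independent of $n$, and since $m_0>0$ and $\alpha<1$ this forces $m_n\geq\kappa^{(1-\alpha^n)/(1-\alpha)}m_0^{\alpha^n}\to\kappa^{1/(1-\alpha)}>0$ --- but it is a step you cannot omit. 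Relatedly, your scheme produces the \emph{maximal} solution below $c\,K^\sigma*\rho$ rather than the minimal positive one, so if your construction replaced the paper's, the minimality invoked in Proposition \ref{02034} and Theorem \ref{tuniell} would have to be re-established.
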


\begin{remark}\label{ossd3}
In view of Proposition \ref{oss1b}, we have that if $\eqref{A0}$
and \eqref{A2b} are satisfied, and $N>2\sigma$, then \eqref{eea1} holds and
$K^\sigma * \rho\in L^1_{\rho}(\Rr^N)$.
\end{remark}

In the sequel, for any $R>0$,  we shall make use of problem

\begin{equation}
\label{190214}
\begin{cases}
(-\Delta)^\frac{\sigma}{2}u_R = \rho\, u_R^\alpha&\text{in }B_R\\
u_R=0 & \text{in }\partial B_R.
\end{cases}
\end{equation}

Now, we state some results concerning problem \eqref{190214}, that
can be proved by standard methods (see \cite{BCdP}). To begin
with, by the classical procedure of sub-- and super solutions,
next Lemma can be deduced (see \cite[Lemma 3.1]{BCdPS} or
\cite[Lemma 4.2]{BCdP}).

\begin{lemma}
\label{190216} Let assumption \eqref{A0} be satisfied. Let $u_1$
and $u_2$ be respectively a subsolution and a supersolution to
problem \eqref{19028}, and assume that $u_1\leq u_2$ in $D$. Then
there exists $u$ solution to problem   \eqref{19028} such that
$u_1\leq u \leq u_2$ in $D$.
\end{lemma}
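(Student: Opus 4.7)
The plan is the classical monotone iteration scheme, leveraging that the sublinearity $t\mapsto t^{\alpha}$ is nondecreasing on $[0,\infty)$. Set $u^{(0)}:=u_1$ and, inductively, let $u^{(n+1)}\in H_0^{\sigma/2}(D)$ be the unique weak solution of the linear Dirichlet problem
\begin{equation*}
\begin{cases}
(-\Delta)^{\sigma/2} u^{(n+1)} = \rho\,(u^{(n)})^{\alpha} & \text{in } D,\\
u^{(n+1)}=0 & \text{on } \partial D.
\end{cases}
\end{equation*}
Existence and uniqueness at each step are standard: by induction $0\le u^{(n)}\le\|u_2\|_{L^\infty}$, so the source $\rho(u^{(n)})^{\alpha}$ belongs to $L^\infty(D)\subset L^2(D)$, and the Lax--Milgram theorem applied to the continuous coercive bilinear form $(v,w)\mapsto\int_D(-\Delta)^{\sigma/4}v\,(-\Delta)^{\sigma/4}w\,\dd x$ on $H_0^{\sigma/2}(D)$ delivers $u^{(n+1)}$.

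Next I would establish the two-sided sandwich $u_1\le u^{(n)}\le u^{(n+1)}\le u_2$ by weak maximum principle arguments. For the upper bound, the supersolution property of $u_2$ combined with the inductive bound $u^{(n)}\le u_2$ gives $(-\Delta)^{\sigma/2}u^{(n+1)}=\rho(u^{(n)})^{\alpha}\le\rho u_2^{\alpha}\le(-\Delta)^{\sigma/2}u_2$ in the weak sense; testing this inequality against $(u^{(n+1)}-u_2)^+\in H_0^{\sigma/2}(D)$ (admissible since both $u^{(n+1)}$ and $u_2$ vanish at $\partial D$ in the trace sense of $H_0^{\sigma/2}$) and invoking the fractional Stroock--Varopoulos type inequality
\begin{equation*}
\int_D(-\Delta)^{\sigma/4}v\,(-\Delta)^{\sigma/4}v^{+}\,\dd x\ \ge\ \int_D\bigl|(-\Delta)^{\sigma/4}v^{+}\bigr|^{2}\,\dd x
\end{equation*}
forces $(u^{(n+1)}-u_2)^+\equiv 0$. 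The lower bound $u^{(n+1)}\ge u^{(n)}$ is obtained by an analogous argument, using the subsolution property of $u_1$ in the base case $n=0$ and the monotonicity of $t\mapsto t^{\alpha}$ to propagate the inequality at subsequent steps.

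To finish, I would pass to the limit. The monotone sequence $\{u^{(n)}\}$ admits a pointwise a.e. limit $u$ with $u_1\le u\le u_2$, and dominated convergence gives $u^{(n)}\to u$ in every $L^p(D)$, $p<\infty$. Testing the weak formulation at step $n+1$ against $u^{(n+1)}$ yields
\begin{equation*}
\|u^{(n+1)}\|_{H_0^{\sigma/2}(D)}^{2}\ =\ \int_D\rho\,(u^{(n)})^{\alpha}u^{(n+1)}\,\dd x\ \le\ \|\rho\|_{L^\infty}\|u_2\|_{L^\infty}^{\alpha+1}|D|,
\end{equation*}
so $\{u^{(n)}\}$ is uniformly bounded in $H_0^{\sigma/2}(D)$; extracting a weakly convergent subsequence and identifying the weak limit with the pointwise limit $u$ gives $u\in H_0^{\sigma/2}(D)\cap L^\infty(D)$. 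Passing to the limit in
\begin{equation*}
\int_D(-\Delta)^{\sigma/4}u^{(n+1)}\,(-\Delta)^{\sigma/4}\varphi\,\dd x\ =\ \int_D\rho\,(u^{(n)})^{\alpha}\varphi\,\dd x,\qquad \varphi\in C^{\infty}_0(D),
\end{equation*}
(weak convergence on the left, dominated convergence on the right, using $\alpha<1$) shows that $u$ meets Definition \ref{defsol02}.

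The main obstacle is the fractional weak maximum principle used in the comparison step: one must check that $(u^{(n+1)}-u_2)^+$ is an admissible test function and that feeding it into the weak inequality $(-\Delta)^{\sigma/2}(u^{(n+1)}-u_2)\le 0$ really produces the pointwise bound, which hinges on the nonlocal Stroock--Varopoulos estimate quoted above (available in the form used in \cite{CDDS}). Once this comparison tool is in hand, the sublinearity $\alpha\in(0,1)$ enters only through the monotonicity of $t\mapsto t^{\alpha}$ and every remaining step is routine functional analysis.
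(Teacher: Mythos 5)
Your proposal is correct and follows exactly the route the paper intends: the paper does not prove Lemma \ref{190216} itself but refers to the classical sub/supersolution monotone iteration of \cite[Lemma 3.1]{BCdPS} and \cite[Lemma 4.2]{BCdP}, which is precisely the scheme you carry out (linear iterates via Lax--Milgram, comparison through the Kato/Stroock--Varopoulos inequality, monotone limit with a uniform $H_0^{\sigma/2}(D)$ bound). Your treatment of the comparison step, including the admissibility of $(u^{(n+1)}-u_2)^{+}$ as a test function, is the right technical core and matches the maximum-principle tools available for the spectral fractional Laplacian in \cite{CDDS}.
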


The following comparison result can be easily deduced by the same
arguments as in \cite[Lemma 4.3]{BCdP}\,.
\begin{lemma}
\label{190215} Let assumption \eqref{A0} be satisfied. Let $u_1$
and $u_2$ be respectively a subsolution and a supersolution to
problem \eqref{19028}, and assume that $u_1, u_2>0$. Then $u_1\leq
u_2$ in $D$.
\end{lemma}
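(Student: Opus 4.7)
The strategy is the fractional adaptation of the classical Brezis--Oswald comparison technique, as carried out in \cite[Lemma 4.3]{BCdP}, exploiting the strict monotonicity of $s\mapsto s^{\alpha-1}$ coming from $\alpha<1$. I would choose as test functions
\begin{equation*}
\varphi_1 := \frac{(u_1^2-u_2^2)_+}{u_1}, \qquad \varphi_2 := \frac{(u_1^2-u_2^2)_+}{u_2},
\end{equation*}
both nonnegative, supported in $\{u_1>u_2\}$, and admissible in $H_0^{\sigma/2}(D)$ thanks to $u_1,u_2 > 0$ and $u_i\in L^\infty$. Inserting $\varphi_1$ into the weak subsolution inequality for $u_1$, $\varphi_2$ into the weak supersolution inequality for $u_2$, and subtracting, I would obtain
\begin{equation*}
\int_D\Big[(-\Delta)^{\sigma/4}u_1\,(-\Delta)^{\sigma/4}\varphi_1 - (-\Delta)^{\sigma/4}u_2\,(-\Delta)^{\sigma/4}\varphi_2\Big]\dd x \,\leq\, \int_D\rho\,(u_1^{\alpha-1}-u_2^{\alpha-1})(u_1^2-u_2^2)_+\,\dd x.
\end{equation*}
Since $\alpha-1<0$, on $\{u_1>u_2\}$ one has $u_1^{\alpha-1}<u_2^{\alpha-1}$, so the right-hand side is $\leq 0$.

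The key step is to prove that the left-hand side is $\geq 0$; this is a fractional Picone-type identity. The cleanest route is to lift everything to the $\sigma$-harmonic extension introduced in \eqref{26022}: setting $W_i := \D{E}(u_i)$, which is strictly positive in $\overline{\Omega}$ by the strong maximum principle for the extension problem, the bilinear forms $\int_{\Rr^N}(-\Delta)^{\sigma/4}u_i\,(-\Delta)^{\sigma/4}\varphi_i\,\dd x$ become the weighted Dirichlet integrals $\mu_\sigma\int_\Omega y^{1-\sigma}\nabla W_i\cdot\nabla\Phi_i\,\dd x\,\dd y$, where $\Phi_i:=(W_1^2-W_2^2)_+/W_i$ is the natural extension of $\varphi_i$. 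A direct pointwise calculation --- the exact analog of the classical Brezis--Oswald identity --- gives
\begin{equation*}
\nabla W_1\cdot\nabla\Phi_1 - \nabla W_2\cdot\nabla\Phi_2 \,=\, \Big|\nabla W_1 - \tfrac{W_1}{W_2}\nabla W_2\Big|^2 + \Big|\nabla W_2 - \tfrac{W_2}{W_1}\nabla W_1\Big|^2
\end{equation*}
on $\{W_1>W_2\}$, which is manifestly nonnegative; multiplying by $y^{1-\sigma}$ and integrating over $\Omega$ then yields the desired lower bound on the left-hand side above.

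Combining the two estimates forces both sides of the main inequality to vanish identically. The vanishing of the right-hand side gives $(u_1^2-u_2^2)_+\equiv 0$ on $\{\rho>0\}$, while the rigidity case of the Picone identity forces $\nabla(W_1/W_2)\equiv 0$ on $\{W_1>W_2\}$, so $W_1/W_2$ is constant on each connected component of that set; together with the boundary data $u_i=0$ on $\partial D$, this rules out a nonempty $\{u_1>u_2\}$ and yields $u_1\leq u_2$ in $D$. The main technical obstacle is the admissibility of $\varphi_i$ near $\partial D$, where the denominators vanish; the standard remedy is to replace $u_i$ by $u_i+\varepsilon$ in the denominators, run the argument, and pass to the limit $\varepsilon\to 0^+$ exploiting the boundedness of $\rho\,u_i^{\alpha-1}$ on compact subsets and the boundary regularity for the fractional Dirichlet problem.
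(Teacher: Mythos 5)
The paper offers no written proof of this lemma, deferring entirely to \cite[Lemma 4.3]{BCdP}, and your argument --- the fractional Brezis--Oswald/Picone comparison with test functions $(u_1^2-u_2^2)_+/u_i$, lifted to the $\sigma$-harmonic extension, with the $\varepsilon$-regularization of the denominators --- is precisely the argument of that reference. So the proposal is correct and takes essentially the same approach as the paper.
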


Moreover, the next existence result holds.
\begin{proposition}
\label{190213}
Let assumption \eqref{A0} be satisfied. Then for any $R>0$ there exists a solution $u_R$ to problem
\eqref{190214}.
\end{proposition}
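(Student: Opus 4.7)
The plan is to apply the sub- and supersolution framework of Lemma~\ref{190216}: it suffices to exhibit an ordered pair of sub/supersolutions to \eqref{190214} and invoke that lemma.

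As a natural supersolution, I would take $\bar u := \lambda U_R$, where $U_R$ is the nonnegative bounded solution of the linear Dirichlet problem \eqref{260214}, whose existence and the bound $U_R\in H_0^{\sigma/2}(B_R)\cap L^\infty(B_R)$ were recorded after that problem was introduced. Since $U_R$ satisfies $(-\Delta)^{\sigma/2} U_R=\rho$ in $B_R$, linearity gives $(-\Delta)^{\sigma/2}\bar u=\lambda \rho$, and the supersolution inequality
\[
\lambda \rho \;\geq\; \rho\,\bar u^\alpha \;=\; \lambda^\alpha \rho\, U_R^\alpha
\]
holds as soon as $\lambda^{1-\alpha}\geq \|U_R\|_{L^\infty(B_R)}^\alpha$, thanks to $\alpha\in(0,1)$. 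The boundary condition $\bar u=0$ on $\partial B_R$ is inherited from $U_R$.

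For the subsolution I would take $\underline u\equiv 0$; since $\alpha>0$ one has $(-\Delta)^{\sigma/2}(0)=0\leq \rho\cdot 0^\alpha$, and trivially $\underline u\leq \bar u$. Lemma~\ref{190216} then produces a solution $u_R$ of \eqref{190214} sandwiched between $0$ and $\lambda U_R$, which is exactly the existence statement claimed.

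The only genuine checking is that $\bar u$ is a supersolution in the \emph{weak} (energy) sense of Definition~\ref{defsol02}, i.e.\ that the integration-by-parts identity goes through with the correct sign against nonnegative test functions; this is routine given the variational formulation but is the step I expect to require the most care. A secondary issue, not needed for the bare existence claim but relevant for the $R\to\infty$ construction in Theorem~\ref{190217}, is that starting the monotone iteration from $\underline u\equiv 0$ may yield $u_R\equiv 0$. To force $u_R\not\equiv 0$ one either starts the iteration from a small positive subsolution (e.g.\ $\varepsilon\varphi$, with $\varphi$ a first Dirichlet eigenfunction of $(-\Delta)^{\sigma/2}$ on a subdomain where $\int\rho\,\dd x>0$, exploiting $\alpha<1$ to absorb constants for $\varepsilon$ small), or minimizes the energy $J(u)=\tfrac12\|(-\Delta)^{\sigma/4}u\|_{L^2}^2-\tfrac{1}{\alpha+1}\int_{B_R}\rho|u|^{\alpha+1}\,\dd x$ on $H_0^{\sigma/2}(B_R)$, which is coercive since $\alpha+1<2$, weakly lower semicontinuous, and admits a strictly negative infimum (test with $t\psi$ for small $t>0$ and any $\psi$ with $\int\rho|\psi|^{\alpha+1}\,\dd x>0$), producing a nontrivial weak solution.
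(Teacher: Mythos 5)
Your proposal is essentially correct, but your primary route differs from the paper's: the paper proves Proposition~\ref{190213} directly by minimizing the energy functional $J(w)=\tfrac12\int_{B_R}|(-\Delta)^{\sigma/4}w|^2\,\dd x-\tfrac1{\alpha+1}\int_{B_R}\rho\,w^{\alpha+1}\,\dd x$, which is coercive and bounded below on $H_0^{\sigma/2}(B_R)$ precisely because $\alpha+1<2$ --- this is exactly the alternative you relegate to your closing remark. Your sub/supersolution argument with $\underline u\equiv 0$ and $\bar u=\lambda U_R$ does establish the literal statement (indeed $u\equiv 0$ already satisfies Definition~\ref{defsol02}), and the supersolution computation with $\lambda^{1-\alpha}\geq\|U_R\|_{L^\infty(B_R)}^{\alpha}$ is the same one the paper performs later, inside the proof of Theorem~\ref{190217}, to get the bound $u_R\leq CU_R$. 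You are right that nontriviality is the real issue, since the proof of Theorem~\ref{190217} immediately invokes the strong maximum principle to get $u_R>0$. However, your first proposed fix does not work under the standing hypotheses: $\rho$ is only assumed nonnegative and may vanish on large subsets of $B_R$, so the subsolution inequality $\varepsilon\lambda_1^{\sigma/2}\varphi\leq\rho\,(\varepsilon\varphi)^{\alpha}$ fails pointwise wherever $\rho=0$ and $\varphi>0$, no matter how small $\varepsilon$ is; moreover extending an eigenfunction of a subdomain by zero is delicate for a nonlocal operator. Your second fix --- testing $J$ with $t\psi$ for small $t$ and $\int_{B_R}\rho|\psi|^{\alpha+1}\,\dd x>0$ to get a strictly negative infimum --- is the robust one, and is in substance the paper's proof. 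So the two approaches buy different things: the variational method delivers the nontrivial minimizer in one step, while the sub/supersolution method is what later yields the comparison $u_R\leq CU_R$ and the monotonicity in $R$; the paper uses both, in that order.
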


\begin{proof}
Consider the functional $J:  \dot H^{\sigma/2}(B_R)\to \Rr$ defined as
\[
J(w):= \frac{1}{2} \int_{B_R} \left|(-\Delta)^\frac{\sigma}{4}w\right|^2\, \dd x - \frac{1}{\alpha+1}\int_{B_R} \rho\, w^{\alpha+1}\, \dd x
\]
In view of Definition \ref{defsol02},
it is well defined, bounded from below and  coercive in $\dot H^{\sigma/2}(B_R)$. Then by standard tools, the conclusion follows.
\end{proof}

\smallskip
Now we can prove Theorem \ref{190217}.

\begin{proof}
[Proof of Theorem \ref{190217}]
For any $R>0$, by Proposition \ref{190213} a solution $u_R$ to problem \eqref{190214} exists. Moreover, by Lemma \ref{190215} it is unique.
By strong maximum principle,
\begin{equation}\label{17042}
u_R>0\quad \textrm{in}\;\; B_R\,.
\end{equation}
Observe that
\begin{equation}
\label{190218}
R<R' \quad\Rightarrow\quad u_R \leq u_{R'} \text{ in }B_R.
\end{equation}
In fact, in view of \eqref{17042}, $u_{R'}$ is a supersolution to \eqref{190214}. Then $u_R \leq u_{R'}$.

Let $U_R$ be the solution to problem \eqref{260214}. Due to
\eqref{ed2}, for  $C\geq (\tilde C\|K^\sigma * \rho
\|_{L^\infty(B_R)})^\frac{\alpha}{1-\alpha}$, $CU_R$ is a
supersolution to problem \eqref{190214}. In fact, for any
$\varphi\in C^\infty_0(B_R)$,
\[
\int_{\Rr^N}(-\Delta)^{\sigma/4}(CU_R)\,
(-\Delta)^{\sigma/4}\varphi \, \dd x = \int_{ \Rr^N} C\, \rho\,
\varphi \,\dd x \geq  \int_{ \Rr^N} \rho\, (CU_R)^\alpha\,
\varphi \,\dd x.
\]

Hence,
\begin{equation}
\label{190219} 0\leq u_R \leq CU_R \quad\text{in }B_R.
\end{equation}

From \eqref{190218}, \eqref{190219} and \eqref{ed2} it follows
that there exists
\begin{gather*}
u:=  \lim_{R\to \infty} u_R \quad\text{in }\Rr^N;
\end{gather*}
furthermore, $u\in L^\infty(\Rr^N)$ and \eqref{er10} holds true.

\smallskip

For each $R>0$, take a sequence $\{\varphi_n\}\subset C^\infty_0(B_R), \varphi_n\to u_R$ in $\dot H^{\sigma/2}_0(B_R)$ as $n\to\infty$.
From Definition  \ref{defsol02}, for each $R>0$, for all $n\in \mathbb N$, we have:
\[
\int_{B_R} (-\Delta)^{\sigma/4}u_R\,
(-\Delta)^{\sigma/4}\varphi_n \, \dd x =\int_{B_R} \rho\,  u^{\alpha}_R\,  \varphi_n\,  \dd x.
\]

Letting $n\to\infty$ we obtain:
\begin{equation}\label{19031}
\int_{B_R} |(-\Delta)^{\sigma/4}u_R|^2 \, \dd x = \int_{B_R} \rho \, u_R^{\alpha+1}\,  \dd x\,.
\end{equation}
Take any open subset $V\subset\Rr^N$ and select $R_0>0$ so big
that $V \subset B_{R_0}$. From \eqref{190219}, \eqref{ed2} and
\eqref{H1}, since $K^\sigma*\rho\in L^\infty(\Rr^N)\cap
L^1_{\rho}(\Rr^N) \cap \dot H^{\sigma/2}(\Rr^N)$, there exists
$C>0$, independent of $R$, such that
\begin{equation}
\label{190222}
\int_{B_R} \rho\, u_R^{\alpha+1}\, \dd x \leq C \qquad\text{for any }R>0.
\end{equation}
Therefore \eqref{19031} implies
\begin{equation}
\label{190223}
\int_{B_R} |(-\Delta)^{\sigma/4}u_R|^2 \, \dd x  \leq C \qquad\text{for any }R>0.
\end{equation}
By letting $R\to \infty$ in \eqref{190222} and \eqref{190223}, we
obtain \eqref{02016ter}, $u\in \dot H^{\sigma/2}(\Rr^N)\cap
L^\infty(\Rr^N)$. This completes the proof.
\end{proof}


\begin{remark}
\label{rem1} Assume that $\rho$ satisfies \eqref{A0}, \eqref{A2b}; let $N>2\sigma$.
Then the solution constructed in Theorem \ref{190217} satisfies
the following identity:
\[
u(x)=\int_{\Rr^N} \frac{\rho(y) \, u^\alpha (y)}{|x-y|^{N-\sigma}}\dd y.
\]
In fact, let $f(x):= \rho(x) u^\alpha(x) \; (x\in \Rr^N)$, and consider the equation
\begin{equation}\label{19032}
(-\Delta)^{\sigma/2} v = f \quad\text{in }\Rr^N.
\end{equation}
Since $u\in L^\infty(\Rr^N)$ and $\rho$ satisfies \eqref{A0},
\eqref{A2b}, we have that $v=K^\sigma * f\in L^\infty(\Rr^N)\cap
L^1_f(\Rr^N)$. By Proposition \ref{27021}, $v$ is a solution to
equation \eqref{19032}. Since $u$ is nonnegative and bounded, from
Lemma \ref{lemmaRub2} it follows that $v(x)\to 0$ as
$|x|\to\infty$. Clearly, the same holds for $u$ (see
\eqref{er10}). Thus, from Lemma \ref{27026} the conclusion
follows.
 \end{remark}

\begin{remark}
\label{rem2}
The dependence of the solution of problem \eqref{19021} upon $\rho$ is monotone increasing. In fact, if $\rho_1\leq \rho_2$ and $u_1$ and $u_2$ are the corresponding solutions of \eqref{19021}, then $u_2$ is a supersolution to
\[
\begin{cases}
(-\Delta)^\frac{\sigma}{2}u = \rho_1\, u^\alpha &\text{in }B_R\\
u=0 &\text{on }\partial B_R.
\end{cases}
\]
Thus the sequence $v_{1, R}$ of function approximating $u_1$ satisfy $v_{1,R}\leq u_2$ in $B_R$. Passing to the limit as $R\to \infty$ we get $u_1\leq u_2$.
\end{remark}

\section{Uniqueness results}
\label{sec:uni}
\subsection{Fractional porous medium equation with variable density}
For later use we introduce next a fractional porous medium equation and recall some results established in \cite{PT2}.
Consider the following nonlinear nonlocal Cauchy problem:
\begin{equation}
\label{06111}
    \begin{cases}
   \rho\, \partial_t u + (- \Delta)^{\frac{\sigma}{2}}\left[ u^m\right] = 0  & x\in \Rr^N, \quad t>0\\
     u=u_0 & x\in \Rr^N, \quad t=0.
    \end{cases}
\end{equation}
The parameter $m$ is greater or equal to $1$, and we will take later $m=1/\alpha$.
\begin{definition}\label{defsol}
A \emph{solution} to problem \eqref{06111} is a function $u\geq 0$ such that:
\begin{itemize}
\item{} $u\in C([0,\infty); L^1_{\rho}(\Rr^N))$ and $u^m \in L^2_\text{loc}((0, \infty): \dot{H}^{\sigma/2}(\Rr^N))$;
\item for any $T>0$, $\psi\in C^{1}_0(\Rr^N \times (0, T))$ there holds
\begin{equation*}
\label{02016}
\int_0^T \int_{\Rr^N} \rho\, u\, \partial_t \psi \, \dd x\, \dd t - \int_0^T \int_{\Rr^N}(-\Delta)^{\sigma/4}(u^m)\,
(-\Delta)^{\sigma/4}\psi \, \dd x\, \dd t = 0;
\end{equation*}
\item $u(\cdot, 0)=u_0$ almost everywhere.
\end{itemize}
\end{definition}

The previous definitions can be adapted to consider problem \eqref{06111} in bounded domains. Let $R>0$, $u_0\in L^1_{\rho}(B_R)$ and consider the problem
\begin{equation}
\label{02013}
    \begin{cases}
   \rho\, \partial_t u + (- \Delta)^{\frac{\sigma}{2}}\left[ u^m\right] = 0  & x\in B_R, \, t>0,\\
     u=0 & x\in \partial B_R, \, t>0,\\
     u=u_0 &  x\in B_R, \, t=0.
     \end{cases}
\end{equation}

\begin{definition}
\label{02017}
A \emph{solution} to problem \eqref{02013} is a function $u\geq 0$ such that:
\begin{itemize}
\item{} $u\in C([0,\infty); L^1_{\rho}(B_R))$ and $u^m \in L^2_\text{loc}((0, \infty): \dot{H}^{\sigma/2}(B_R))$;
\item for any $T>0, \psi\in C^1_0(B_R \times (0,T))$ there holds
\begin{equation}
\label{02018}
\int_0^T \int_{B_R} \rho\, u\, \partial_t \psi \,\dd x\, \dd t = \int_0^T \int_{B_R}(-\Delta)^{\sigma/4}u^m\,
(-\Delta)^{\sigma/4}\psi \, \dd x\, \dd t;
\end{equation}
\item $u(\cdot, 0)=u_0$ almost everywhere in $B_R$.
\end{itemize}
\end{definition}

\smallskip
Observe that comparison principles hold for problem \eqref{02013} (see \cite{PT2}). Moreover, the existence of the minimal solution to problem \eqref{06111} has been established in \cite{PT2}, together with some uniqueness results, among which we recall for later use the following:
\begin{proposition}\label{tunc} Let $N\geq 2$.
Let assumptions \eqref{A0}, \eqref{A2b} be satisfied. Moreover, suppose that $\rho>0$ in $\Rr^N$, $ u_0\in L^{\infty}(\Rr^N) \cap L^+_\rho(\Rr^N)$,
$m\ge 1$. Then there exists the minimal nonnegative solution $\underline u$ to problem \eqref{06111}; moreover,
\begin{equation}
\label{ea65}
\begin{array}{c}
\displaystyle \int_0^t  \big(\underline u(x,s) \big)^m \dd s  \leq C |x|^{\sigma-\nu-\frac N r} \\ \textrm{ for almost every } x\in \Rr^N\setminus B_{\bar R}\, (\bar R>0), \, t>0,  \\
\text{ for some $C>0$,  with $\nu, r$ as in Lemma \ref{lemmaRub2}}.
\end{array}
\end{equation}
Furthermore,  suppose that $u$ is a solution to problem
\eqref{06111} such that, for any $T>0$,
\begin{equation}
\label{ea65b}
\begin{array}{c}
\displaystyle \int_0^t  \big(u(x,s) \big)^m \dd s  \leq C_T
|x|^{\sigma-\nu-\frac N r} \\ \textrm{ for almost every }
x\in \Rr^N\setminus B_{\bar R}\, (\bar R>0), \, t\in (0,T),  \\
\text{ for some $C_T>0$,  with $\nu, r$ as in Lemma
\ref{lemmaRub2}}.
\end{array}
\end{equation}
Then $u\equiv \underline{u}$ .
\end{proposition}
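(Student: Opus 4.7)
The plan is to address the three claims—existence of the minimal solution $\underline u$, its decay \eqref{ea65}, and uniqueness among solutions satisfying \eqref{ea65b}—in sequence, using the bounded-domain problem \eqref{02013} as an approximation scheme and leveraging the linear theory of Section \ref{sec:linp} to control behavior at infinity. For each $R>0$ I would first solve \eqref{02013} on $B_R$ with initial datum $u_0\chi_{B_R}$; existence, uniqueness, and comparison for this problem are available from \cite{PT2}. Extending each such $u_R$ by zero to $\Rr^N$, the comparison principle gives monotonicity $u_R\leq u_{R'}$ for $R<R'$, and the $L^\infty$ bound $u_R\leq \|u_0\|_\infty$ is preserved. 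The monotone limit $\underline u:=\lim_{R\to\infty}u_R$ then solves \eqref{06111} in the sense of Definition \ref{defsol} via dominated/monotone convergence and the energy bounds inherited from \eqref{02018}, and is minimal since any nonnegative solution $v$ satisfies $v\geq u_R$ on $B_R$ by comparison (its trace on $\partial B_R$ being nonnegative).

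For the decay \eqref{ea65}, the key observation is that formally integrating the equation from $0$ to $t$ yields
\begin{equation*}
(-\Delta)^{\sigma/2}w(x)=\rho(x)\bigl(u_0(x)-\underline u(x,t)\bigr)\leq\rho(x)\,u_0(x)\quad\text{in }\Rr^N,
\end{equation*}
where $w(x):=\int_0^t\underline u(x,s)^m\,\dd s$. Since $u_0\in L^\infty$, the majorant $\rho u_0$ inherits the decay hypothesis \eqref{A2b}. I would then apply Lemma \ref{lemma*}—after checking that $w(x)\to 0$ as $|x|\to\infty$ via the analogous vanishing of the corresponding quantity for each $u_R$ (which lives in $B_R$) and Fatou's lemma—to conclude $w\leq K^\sigma\ast(\rho u_0)$. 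Lemma \ref{lemmaRub2}, applied to $\rho u_0$ in place of $\rho$, then delivers the pointwise bound $C|x|^{\sigma-\nu-N/r}$ with the stated range of $\nu,r$.

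Uniqueness is the delicate part. Given a solution $u$ verifying \eqref{ea65b}, minimality already furnishes $\underline u\leq u$, so the task is to establish the reverse inequality. The natural strategy is a duality argument in the spirit of Pierre-Vázquez, adapted to the nonlocal setting: subtract the weak formulations for $u$ and $\underline u$, test against a product $\eta_R(x)\psi(t)$ with $\eta_R$ a smooth cutoff equal to $1$ on $B_R$, and rearrange so that the only surviving remainder is a boundary-type contribution supported essentially in the annulus $\{R\leq|x|\lesssim 2R\}$. The main obstacle is precisely this remainder: unlike the local case, $(-\Delta)^{\sigma/2}\eta_R$ does not localize but has a nonlocal tail decaying only like $|x|^{-N-\sigma}$, so bounding the remainder requires both the decay \eqref{ea65} of $\underline u$ and the hypothesis \eqref{ea65b} on $u$. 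Together these furnish just enough polynomial decay to send $R\to\infty$ and conclude that $\int_0^T\int_{\Rr^N}\rho(u-\underline u)\psi\,\dd x\,\dd t\leq 0$ for every nonnegative $\psi\in C^1_0$; the degenerate structure (the factor $u^m-\underline u^m$ rather than $u-\underline u$) is then handled by the elementary monotonicity of $s\mapsto s^m$ to deduce $u\equiv\underline u$ a.e.
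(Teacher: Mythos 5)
The paper does not actually prove this proposition: its ``proof'' is the single line ``See \cite[Theorem 5.9]{PT2}'', so there is no in-paper argument to compare yours against. Judged on its own terms, your outline is a plausible reconstruction of what such a proof must do (exhaustion by balls for existence and minimality, a time-integrated elliptic comparison for the decay, a duality/cut-off argument for uniqueness), but two points are genuine gaps rather than omitted routine detail.

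First, for the decay \eqref{ea65} you invoke Lemma \ref{lemma*}, which is stated (and, per Remark \ref{vw}, genuinely needs) the hypothesis $N>2\sigma$, since it goes through $K^\sigma*\rho\in\dot H^{\sigma/2}(\Rr^N)$ and the whole-space uniqueness Lemma \ref{27026}. Proposition \ref{tunc} assumes only $N\ge 2$, which does not imply $N>2\sigma$ (e.g.\ $N=2$, $\sigma=3/2$). The consistent fix, using only the machinery the paper sets up, is to run the comparison at the level of the approximations: $w_R(x):=\int_0^t u_R(x,s)^m\,\dd s$ satisfies $(-\Delta)^{\sigma/2}w_R\le \rho\,u_0$ in $B_R$ with zero exterior data, so the Green-function domination $G_R(x,y)\le \tilde C K^\sigma(|x-y|)$ (i.e.\ the mechanism behind \eqref{ed2}) gives $w_R\le \tilde C\,K^\sigma*(\rho u_0)$ uniformly in $R$, and one passes to the monotone limit; Lemma \ref{lemmaRub2} applied to $\rho u_0$ (which satisfies \eqref{A0}, \eqref{A2b}) then yields \eqref{ea65}. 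You should also note that ``integrating the equation in time'' to get the elliptic inequality for $w$ requires justification for energy solutions, not just formally.

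Second, the uniqueness argument is asserted rather than carried out. The whole content of the cited theorem is the quantitative claim that the decays \eqref{ea65} and \eqref{ea65b} beat the tail of $(-\Delta)^{\sigma/2}\eta_R$; you state that they ``furnish just enough polynomial decay'' without the computation. Concretely one needs that $\int_0^T|u^m-\underline u^m|\,\dd s\lesssim |x|^{\sigma-\nu-N/r}$ together with $|(-\Delta)^{\sigma/2}\eta_R|\lesssim R^{-\sigma}$ on $B_{2R}$ and $\lesssim R^N|x|^{-N-\sigma}$ outside forces the remainder to scale like $R^{N-\nu-N/r}$, which tends to $0$ precisely because $r<N/(N-\nu)$ in \eqref{ea59}; this verification, plus the justification for pairing the energy-class function $u^m-\underline u^m$ with $(-\Delta)^{\sigma/2}\eta_R$ pointwise, is the part you cannot wave at. Also be careful that minimality gives $\underline u\le u$ only after checking that an arbitrary solution $u$ of \eqref{06111} is a supersolution of each truncated problem \eqref{02013}, which uses $u\ge 0$ outside $B_R$ and the comparison principle from \cite{PT2}.
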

\begin{proof}
See \cite[Theorem 5.9]{PT2}
\end{proof}

\subsection{Uniqueness of solutions for the elliptic problem}

\begin{lemma}
\label{lemma**}
Let $u_1$ and $u_2$ respectively a subsolution and a supersolution of \eqref{19021}.
Then there exists $u$ solution to \eqref{19021} such that $u_1 \leq u \leq u_2$ in $\Rr^N$.
\end{lemma}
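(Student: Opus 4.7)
The plan is to adapt to the whole space the monotone iteration underlying Lemma~\ref{190216}, using the Riesz-potential operator in place of the Dirichlet solution operator. I introduce
\[
T(v):=K^{\sigma}*(\rho\,v^{\alpha}),
\]
which, for each $v$ with $0\leq v\leq u_2$, is a bounded nonnegative solution of $(-\Delta)^{\sigma/2}w=\rho\,v^{\alpha}$ in $\Rr^N$ vanishing at infinity: the boundedness follows from Proposition~\ref{27021} applied with right-hand side $\rho\,v^{\alpha}\leq \|u_2\|^{\alpha}_{L^\infty}\rho$, and the decay from Lemma~\ref{lemmaRub2}. The operator $T$ is monotone increasing because $t\mapsto t^{\alpha}$ is.

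Next, I would prove the two key inequalities $T(u_1)\geq u_1$ and $T(u_2)\leq u_2$, which open the iteration. The first is a direct application of Lemma~\ref{lemma*} to the subsolution $u_1$, under the implicit (and natural) assumption that $u_1$ vanishes at infinity and $(-\Delta)^{\sigma/2}u_1\in L^\infty(\Rr^N)$. For the second, I would set $W:=u_2-K^{\sigma}*(\rho\,u_2^{\alpha})$ and observe that $g:=(-\Delta)^{\sigma/2}u_2-\rho\,u_2^{\alpha}\geq 0$ is bounded, so that $W$ is a bounded solution of $(-\Delta)^{\sigma/2}W=g$ vanishing at infinity (using Lemma~\ref{lemmaRub2} to ensure $K^{\sigma}*(\rho\,u_2^{\alpha})(x)\to 0$). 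Lemma~\ref{27026} then forces $W\equiv K^{\sigma}*g\geq 0$, giving $T(u_2)\leq u_2$. Starting the iteration at $v_0:=u_2$ and setting $v_{n+1}:=T(v_n)$, monotonicity of $T$ yields a pointwise decreasing sequence $\{v_n\}$ with $u_1\leq v_n\leq u_2$.

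Finally, letting $u:=\lim_n v_n$, the dominated convergence theorem applied to the integral representation $v_{n+1}(x)=\int_{\Rr^N}K^{\sigma}(x-y)\rho(y)v_n(y)^{\alpha}\,\dd y$ yields $u=K^{\sigma}*(\rho\,u^{\alpha})$, hence $(-\Delta)^{\sigma/2}u=\rho\,u^{\alpha}$ in $\Rr^N$ with $u_1\leq u\leq u_2$. To verify that $u$ fits Definition~\ref{defsol01}, boundedness is free from $u\leq u_2$, while the weak identity~\eqref{02016ter} and the membership $u\in\dot H^{\sigma/2}(\Rr^N)$ come from uniform a~priori bounds on the iterates analogous to~\eqref{190222}--\eqref{190223} in the proof of Theorem~\ref{190217}, combined with weak compactness in $\dot H^{\sigma/2}$. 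I expect this last step to be the main technical hurdle, because $\dot H^{\sigma/2}(\Rr^N)$ is only a homogeneous Sobolev space and the passage to the limit in the nonlocal quadratic form $\int(-\Delta)^{\sigma/4}v_n\,(-\Delta)^{\sigma/4}\varphi\,\dd x$ against compactly supported test functions requires a careful weak-convergence argument rather than pointwise reasoning.
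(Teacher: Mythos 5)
Your proposal is correct and follows essentially the same route as the paper: the paper's proof is a one-line appeal to "the standard technique of monotone iteration in the whole $\Rr^N$, thanks to Lemma \ref{lemma*}" (as in Theorem 2 of \cite{BK}), which is exactly the scheme you carry out in detail with the Riesz-potential operator $T(v)=K^{\sigma}*(\rho\,v^{\alpha})$. Your explicit flagging of the implicit hypotheses on $u_1$ (decay at infinity, bounded fractional Laplacian) needed to invoke Lemma \ref{lemma*}, and of the weak-convergence step in $\dot H^{\sigma/2}(\Rr^N)$, is consistent with how the lemma is actually used later (e.g.\ in Lemma \ref{01031}).
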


\begin{proof}
Thanks to Lemma \ref{lemma*}, we can apply the standard technique of monotone iteration in the whole $\Rr^N$, and get the conclusion (note that the same argument has been
applied in the proof of \cite[Theorem 2]{BK}).
\end{proof}

\begin{lemma}
\label{01031}
Let $\rho_1$ and $\rho_2$ satisfying \eqref{A0}--(i), \eqref{A2b}, and assume $\rho_1\leq \rho_2$. Let $N>2\sigma.$ Then, for any $u_1$ bounded solution to
\[
-(\Delta)^\frac{\sigma}{2}u_1 = \rho_1\, u_1^\alpha \qquad\text{in }\Rr^N
\]
there exists $u_2$ bounded solution to
\begin{equation}
\label{01032}
(-\Delta)^\frac{\sigma}{2}u_2 = \rho_2\, u_2^\alpha \qquad\text{in }\Rr^N
\end{equation}
such that
\begin{equation}\label{e17041}
u_2(x) \leq C |x|^{\sigma - \nu - \frac{N}{r}}, \qquad u_1\leq u_2 \text{ in }\Rr^N
\end{equation}
for some $C>0$, with $\nu$ and $r$ as in Lemma \ref{lemmaRub2}.
\end{lemma}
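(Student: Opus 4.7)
The plan is to apply the sub-/super-solution framework provided by Lemma~\ref{lemma**}, using $u_1$ itself as subsolution and a suitably scaled Riesz potential $C_*\Phi$, with $\Phi := K^\sigma \ast \rho_2$, as supersolution. Since $\rho_1 \le \rho_2$, the identity $(-\Delta)^{\sigma/2} u_1 = \rho_1 u_1^\alpha \le \rho_2 u_1^\alpha$ makes $u_1$ a subsolution of \eqref{01032}. The hypotheses \eqref{A0}--(i), \eqref{A2b} together with $N > 2\sigma$ let me invoke Proposition~\ref{oss1b} for $\rho_2$ to conclude $\Phi \in L^\infty(\Rr^N)\cap \dot H^{\sigma/2}(\Rr^N)$; Proposition~\ref{27021} then yields $(-\Delta)^{\sigma/2}\Phi = \rho_2$ weakly, while Lemma~\ref{lemmaRub2} supplies the pointwise decay $\Phi(x) \le C\,|x|^{\sigma-\nu-N/r}$ with $\nu,r$ as stated.

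Next, I fix $C_* \ge \max\{\|\Phi\|_{\infty}^{\alpha/(1-\alpha)},\,\|u_1\|_{\infty}^\alpha\}$. A direct calculation shows that $C_*\Phi$ is a supersolution of \eqref{01032}:
\[
(-\Delta)^{\sigma/2}(C_*\Phi) \,=\, C_*\rho_2 \,\ge\, C_*^\alpha \Phi^\alpha\,\rho_2 \,=\, \rho_2 (C_*\Phi)^\alpha,
\]
the inequality stemming from $C_*^{1-\alpha} \ge \|\Phi\|_\infty^\alpha \ge \Phi^\alpha$ pointwise. To see $u_1 \le C_*\Phi$ pointwise, I combine $(-\Delta)^{\sigma/2} u_1 \le \|u_1\|_{\infty}^\alpha \rho_2$ with Lemma~\ref{lemma*}, which yields $u_1 \le \|u_1\|_{\infty}^\alpha \Phi \le C_*\Phi$. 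Invoking Lemma~\ref{lemma**} then produces a solution $u_2$ of \eqref{01032} satisfying $u_1 \le u_2 \le C_*\Phi$ in $\Rr^N$; the decay \eqref{e17041} follows at once from $u_2 \le C_*\Phi$ and Lemma~\ref{lemmaRub2}.

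The main obstacle I foresee is the applicability of Lemma~\ref{lemma*}, whose hypothesis includes $u_1(x) \to 0$ as $|x| \to \infty$, a property not stated a priori for a generic bounded solution $u_1$. I would derive this decay beforehand: the function $f := \rho_1 u_1^\alpha$ lies in $L^\infty(\Rr^N)$ and, by \eqref{A2b}, satisfies $f(x) \le C|x|^{-\beta}$ outside a large ball, so Remark~\ref{remST}--(iv) gives $K^\sigma \ast f \in L^\infty(\Rr^N)$ and Lemma~\ref{lemmaRub2} yields $(K^\sigma \ast f)(x) \to 0$ at infinity. A uniqueness argument mirroring the harmonic-extension computation in Lemma~\ref{27026}, applied to the equation $(-\Delta)^{\sigma/2} v = f$ and exploiting $u_1 \in \dot H^{\sigma/2}(\Rr^N)$ to rule out a non-vanishing $\sigma$-harmonic additive correction, would then identify $u_1$ with $K^\sigma \ast f$, from which the required decay $u_1(x) \to 0$ follows.
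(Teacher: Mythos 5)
Your proposal follows essentially the same route as the paper: $u_1$ serves as the subsolution, a large multiple of $K^\sigma * \rho_2$ as the supersolution, Lemma~\ref{lemma*} gives the ordering $u_1\le C_*\,(K^\sigma*\rho_2)$, Lemma~\ref{lemma**} produces the sandwiched solution $u_2$, and Lemma~\ref{lemmaRub2} yields the decay \eqref{e17041}. Your additional step --- establishing $u_1(x)\to 0$ as $|x|\to\infty$ by identifying $u_1$ with $K^\sigma*(\rho_1 u_1^\alpha)$ so that the hypotheses of Lemma~\ref{lemma*} are actually met --- verifies a condition the paper's own proof invokes silently, and is a sound refinement rather than a different argument.
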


\begin{proof}
Set
\[
\tilde C= \left(\|u_1\|_{L^\infty(\Rr^N)}\right)^\alpha.
\]
Then
\[
(-\Delta)^\frac{\sigma}{2}u_1 \leq \rho_2\, u_1^\alpha \leq \tilde C\rho_2 \qquad\text{in }\Rr^N.
\]
The function $V:= \bar{C}(K^\sigma * \rho_2)$ satisfies, for $\bar{C}>\tilde C$ sufficiently large,
\[
(-\Delta)^\frac{\sigma}{2}V  = \bar{C} \rho_2 \geq \rho_2  V^\alpha \qquad\text{in }\Rr^N.
\]
Thus $u_1$ and $V$ are respectively a subsolution and a supersolution of the same problem:
\[
(-\Delta)^\frac{\sigma}{2}U = \bar{C} \rho_2 \qquad\text{in }\Rr^N.
\]
By Lemma \ref{lemma*}, $u_1\leq V$ in $\Rr^N$.  Hence from Lemma \ref{lemma**} there exists a solution $u_2$ to problem \eqref{01032} such that
\[ u_1 \leq u_2 \leq V\quad \textrm{in}\;\; \Rr^N\,.\]
So, from Lemma \ref{lemmaRub2}  we get \eqref{e17041}. This completes the proof.
\end{proof}

We establish first uniqueness under the stronger assumption that $\rho>0$:

\begin{proposition}
\label{02034}
Assume \eqref{A0}, \eqref{A2b}; let $N>2\sigma$. Suppose further that $\rho>0$.
Let $\underline{u}$ be the minimal bounded solution to problem \eqref{19021} provided by Theorem \ref{190217}.
Let $u$ be any other bounded solution to problem \eqref{19021} such that
\[
u (x) \leq C |x|^{\sigma - \nu - \frac{N}{r}},
\]
for some $C>0$, with $r$ and $\nu$ as in Lemma \ref{lemmaRub2}. Then $\underline{u}=u$ in $\Rr^N$.
\end{proposition}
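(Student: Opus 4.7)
My plan is to lift each bounded elliptic solution $v$ of \eqref{19021} to a separated-variable PME solution, identify it via Proposition~\ref{tunc}, and then close with a D\'iaz--Saa type symmetrization. I first observe that $\underline u$, built in Theorem~\ref{190217} as the monotone limit $\lim_{R\to\infty} u_R$ of Dirichlet solutions on $B_R$, is the pointwise minimum among bounded non-negative solutions of \eqref{19021}: any other bounded non-negative solution $u$ is a positive supersolution of \eqref{190214} on each $B_R$, so Lemma~\ref{190215} yields $u_R \leq u$ in $B_R$, whence $\underline u \leq u$ in $\Rr^N$ by passing to the limit. For each $v \in \{u, \underline u\}$, I set
\[
W_v(x,t) := \phi(t)\, v^\alpha(x), \qquad \phi(t) := [1+(m-1)t]^{-1/(m-1)}, \quad m = 1/\alpha.
\]
Using $\phi'+\phi^m = 0$, $W_v^m = \phi^m v$, and $(-\Delta)^{\sigma/2} v = \rho v^\alpha$, a direct verification gives $\rho\,\partial_t W_v + (-\Delta)^{\sigma/2}(W_v^m) = \rho\, v^\alpha(\phi'+\phi^m) = 0$, so $W_v$ solves \eqref{06111} with initial datum $v^\alpha \in L^\infty(\Rr^N) \cap L^+_\rho(\Rr^N)$.

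I would then verify that $W_v$ lies in the uniqueness class \eqref{ea65b}: combining the decay hypothesis on $u$, the decay \eqref{er10} for $\underline u$, and $\phi \leq 1$ yields
\[
\int_0^t W_v(x,s)^m\,\dd s = v(x)\int_0^t \phi(s)^m\,\dd s \leq C_T\,|x|^{\sigma - \nu - N/r}
\]
uniformly for $t \in (0,T)$. Proposition~\ref{tunc} then identifies each of $W_u$ and $W_{\underline u}$ with the minimal non-negative PME solution associated to its initial datum.

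The main obstacle is to extract the equality $u \equiv \underline u$ from this PME identification, since $W_u$ and $W_{\underline u}$ carry distinct initial data $u^\alpha$ and $\underline u^\alpha$. I would close the argument by a D\'iaz--Saa type symmetrization directly in the elliptic weak formulation: approximating $\underline u, u \in \dot H^{\sigma/2}(\Rr^N)\cap L^\infty(\Rr^N)$ by $C^\infty_0$ test functions (justified by the definition of $\dot H^{\sigma/2}$ together with Sobolev embedding, the integrability of $\rho\,u^\alpha\,\underline u$ and $\rho\,\underline u^\alpha\, u$ being ensured by $\rho \in L^1(\Rr^N)\cap L^\infty(\Rr^N)$ and the $L^\infty$ bounds on $u, \underline u$), I plug into Definition~\ref{defsol01} the equation for $u$ tested against $\underline u$ and the equation for $\underline u$ tested against $u$; symmetry of the bilinear form $(f,g) \mapsto \int (-\Delta)^{\sigma/4} f\, (-\Delta)^{\sigma/4} g\,\dd x$ gives
\[
\int_{\Rr^N} \rho\, u^\alpha\,\underline u\,\dd x = \int_{\Rr^N} \rho\,\underline u^\alpha\, u\,\dd x,
\]
equivalently $\int_{\Rr^N}\rho\, u\,\underline u\,(u^{\alpha-1} - \underline u^{\alpha-1})\,\dd x = 0$. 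Since $\underline u \leq u$ and $\alpha - 1 < 0$ make $u^{\alpha-1} - \underline u^{\alpha-1} \leq 0$, while $\rho > 0$ by hypothesis and $\underline u > 0$ by the strong maximum principle, the non-positive integrand must vanish pointwise and $u \equiv \underline u$ follows.
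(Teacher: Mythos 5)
Your proof is correct, but it succeeds by a genuinely different mechanism than the paper's, and the PME portion of your write-up is dead weight: as you yourself observe, lifting $u$ and $\underline u$ to the separated-variable solutions $W_u$, $W_{\underline u}$ and invoking Proposition \ref{tunc} only identifies each with the minimal evolution of its \emph{own} initial datum and yields no relation between them; nothing from that part feeds into your final paragraph, so it should be cut. What actually closes the argument is (i) minimality, $\underline u\le u$, via comparison of $u$ with $u_R$ on each ball (this step silently needs $u>0$, i.e.\ $u\not\equiv0$ together with the strong maximum principle --- the trivial solution $u\equiv 0$ would otherwise be a counterexample, an implicit exclusion the paper also makes), and (ii) the symmetrized identity $\int_{\Rr^N}\rho\, u^\alpha\,\underline u\,\dd x=\int_{\Rr^N}\rho\,\underline u^\alpha\, u\,\dd x$, whose sign analysis under $\underline u\le u$, $\alpha<1$, $\rho>0$, $\underline u>0$ forces $u=\underline u$; the extension of \eqref{02016ter} to test functions in $\dot H^{\sigma/2}(\Rr^N)\cap L^\infty(\Rr^N)$ is justified as you indicate, by density, the embedding $\dot H^{\sigma/2}(\Rr^N)\hookrightarrow L^{2N/(N-\sigma)}(\Rr^N)$, and $\rho\in L^1\cap L^\infty$. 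The paper instead genuinely uses the PME: it evolves the datum $C_m u^{1/m}$, traps the limit $v_\infty$ between separated-variable supersolutions built from $\underline u$ shifted in time by $\tau_R$, identifies $v_\infty$ with $\tilde u$ through the uniqueness class of Proposition \ref{tunc} --- which is precisely where the decay hypothesis on $u$ is consumed --- and then lets $t\to\infty$ to obtain $u\le\underline u$. Your symmetrization is exactly the device the paper deploys later, in the proof of Theorem \ref{tuniell}, on the ball approximations $u_{\epsilon,R}$, $u_R$; applying it directly to $u$ and $\underline u$ in $\Rr^N$ is shorter, bypasses the parabolic theory entirely, and in fact never uses the decay hypothesis, so for $\rho>0$ you prove a formally stronger statement. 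What the paper's longer route buys is the parabolic comparison framework it reuses elsewhere and a proof whose stated hypotheses are all actually invoked.
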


\begin{proof}
Set $m:= 1/\alpha$ and
\[
C_m:= (m-1)^{-\frac{1}{m-1}.}
\]
Let $v_R(x,t)$ be the solution to
\begin{equation}
\label{02035}
    \begin{cases}
     \displaystyle\rho\, \frac{\partial v_R}{\partial t} + (- \Delta)^{\frac{\sigma}{2}}\left[ v_R^m\right] = 0  & x\in B_R, \, t>0,\\
     v_R=0 & x\in \partial B_R, \, t>0,\\
          v_R(x,0)= C_m u^\frac{1}{m} &  x\in B_R.
     \end{cases}
\end{equation}

Observe that the function
\[
\tilde{u}(x,t) :=  \frac{C_m}{(t+1)^{\frac{1}{m-1}}} u^\frac{1}{m}(x)
\]
solves
\[
\rho\, \frac{\partial \tilde{u}}{\partial t} + (- \Delta)^{\frac{\sigma}{2}}\left[ \tilde{u}^m\right] = 0, \qquad\text{in }\Rr^N\times (0, \infty).
\]
Moreover, $\tilde u$ is a supersolution to problem \eqref{02035}. Thus, by comparison principles,
\begin{equation}
\label{04031}
v_R \leq \tilde{u} \qquad\text{in }B_R\times (0, \infty).
\end{equation}

Notice that for any $R>0$
\[
\D{ess}\inf_{B_R} \underline{u} >0.
\]
Then we can select $\tau_R>0$ such that
\[
\frac{\underline{u}^\frac{1}{m}}{\tau_R^{\frac{1}{m-1}}}> {u}^\frac{1}{m} \qquad\text{in }B_R.
\]
We have
\[
\underline{\check{u}} (x,t) :=
\frac{C_m\underline{u}^\frac{1}{m}}{(t+\tau_R)^{\frac{1}{m-1}}} \leq
\frac{C_m\underline{u}^\frac{1}{m}}{t^{\frac{1}{m-1}}} 
=: \underline{\tilde{u}} (x,t) \qquad\text{in }B_R \times (0, \infty);
\]
moreover $\underline{\check{u}}$ is a supersolution to \eqref{02035} thus, by comparison principles we get
\begin{equation}
\label{04032}
v_R \leq \underline{\check{u}} \leq  \underline{\tilde{u}} \qquad\text{in }B_R\times (0, \infty).
\end{equation}

Now, by results in \cite{PT2} there exists the limit
\[
v_\infty := \lim_{R\to \infty} v_R;
\]
the function $v_\infty$ solves
\begin{equation}
\label{04033}
    \begin{cases}
     \displaystyle\rho\, \frac{\partial v_\infty}{\partial t} + (- \Delta)^{\frac{\sigma}{2}}\left[ v_\infty^m\right] = 0  & x\in \Rr^N, \, t>0,\\
     v_R(x,0)= C_m u^\frac{1}{m} &  x\in \Rr^N,
     \end{cases}
\end{equation}
and satisfies the inequality
\begin{equation}
\label{04034}
v_\infty^m (x, t) \leq C |x|^{\sigma - \nu- \frac{N}{r}}\quad (x\in \Rr^N, t>0)\,
\end{equation}
for some $C>0$, with $\nu$ and $r$ as in Lemma \ref{lemmaRub2}.
Then, by passing to the limit as $R\to \infty$ in \eqref{04031},
\[
v_\infty \leq \tilde{u} \qquad\text{in }\Rr^N\times (0, \infty).
\]
Notice that, as well as $v_\infty$, the function $\tilde{u}$ solves  \eqref{04033} and satisfies the inequality \eqref{04034}. Then, by Proposition \ref{tunc}
\[
v_\infty = \tilde{u} \qquad\text{in }\Rr^N\times (0, \infty).
\]
Passing to the limit as $R\to \infty$ in \eqref{04032}, we obtain
\[
v_\infty \leq \underline{\tilde{u}} \qquad\text{in }\Rr^N\times (0, \infty)
\]
which in turns entails
\[
\frac{u^\frac{1}{m}}{\underline{u}^\frac{1}{m}} \leq \frac{(t+1)^\frac{1}{m-1}}{t^\frac{1}{m-1}}.
\]
As $t\to + \infty$ we get
\[
u^\frac{1}{m} \leq \underline{u}^\frac{1}{m} \qquad\text{in }\Rr^N.
\]
Since $\underline u$ is minimal it follows that $u = \underline{u}$.
\end{proof}

We discuss now the general case in which $\rho\geq 0$.

\begin{theorem}\label{tuniell}
Assume \eqref{A0}, \eqref{A2b}; let $N>2\sigma$.
Let $\underline{u}$  and $u$ be as in Proposition \ref{02034}. Then $\underline{u}=u$ in $\Rr^N$.
\end{theorem}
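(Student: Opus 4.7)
My plan is a direct sublinear scaling argument based on the integral representation, avoiding any perturbation of $\rho$. The key inputs are that both $\underline u$ and the competitor $u$ satisfy $v = K^\sigma * [\rho\, v^\alpha]$, the minimality inequality $\underline u \le u$, and that $\alpha < 1$ combined with a bounded ratio $u/\underline u$ forces equality. For the minimality, I would invoke the standard comparison principle on each Dirichlet ball problem \eqref{190214}---valid because $s \mapsto \rho\, s^\alpha$ is nondecreasing and $u \ge 0 = u_R$ on $\partial B_R$---to obtain $u_R \le u$ in $B_R$ for every $R$, and then let $R \to \infty$ to conclude $\underline u \le u$ in $\Rr^N$.

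Next, I would apply Remark \ref{rem1} to both $\underline u$ and $u$. The proof of that remark uses only boundedness of the solution and decay at infinity (both valid for the competitor $u$ by hypothesis), via Proposition \ref{27021} and Lemma \ref{27026} applied with $f := \rho\, u^\alpha$ in place of $\rho$, so
\[
u(x) = K^\sigma * [\rho\, u^\alpha](x), \qquad \underline u(x) = K^\sigma * [\rho\, \underline u^\alpha](x).
\]
The step I expect to be the main obstacle is showing that $\lambda := \sup_{\Rr^N} u/\underline u$ is finite. Using $|x-y| \le 2|x|$ for $|y| \le R_0$ and $|x|$ large, the integral representation for $\underline u$ gives
\[
\underline u(x) \ge C_{N,\sigma}(2|x|)^{-(N-\sigma)} \int_{|y| \le R_0} \rho(y)\, \underline u^\alpha(y)\, \dd y \ge c\, |x|^{\sigma - N},
\]
where the last integral is strictly positive for $R_0$ large enough, since $\rho \not\equiv 0$ and $\underline u$ is continuous and strictly positive (as the Riesz potential of a nontrivial bounded nonnegative function). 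Combined with the hypothesis $u(x) \le C|x|^{\sigma - \nu - N/r}$ and the inequality $\nu + N/r > N$ built into the admissible range of exponents in Lemma \ref{lemmaRub2}, this yields that $u/\underline u$ is bounded at infinity; continuity and positivity of $\underline u$ handle any compact region.

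Finally, sublinearity closes the argument: from $u \le \lambda\, \underline u$ I obtain $u^\alpha \le \lambda^\alpha\, \underline u^\alpha$, so by the two integral representations
\[
u(x) \le \lambda^\alpha K^\sigma * [\rho\, \underline u^\alpha](x) = \lambda^\alpha\, \underline u(x),
\]
hence $\lambda \le \lambda^\alpha$. Since $\alpha \in (0,1)$ and $\lambda \ge 1$, this forces $\lambda = 1$, i.e.\ $u \le \underline u$, and combined with $\underline u \le u$ I conclude $u = \underline u$. An alternative more in the spirit of the paper would be to set $\rho_\epsilon := \rho + \epsilon\, \eta > 0$ and combine Lemma \ref{01031} with Proposition \ref{02034} to obtain $u \le \underline u_\epsilon$; but passing $\epsilon \to 0$ to recover $u \le \underline u$ seems to require essentially the same sublinearity input for the identification of the limit, so the direct route is cleaner.
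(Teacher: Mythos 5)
Your proof is correct, but it takes a genuinely different route from the paper's. The paper perturbs the density to $\rho_\epsilon=\rho+\epsilon h>0$, invokes Lemma \ref{01031} and the strictly-positive case (Proposition \ref{02034}, whose proof rests on the parabolic uniqueness result of Proposition \ref{tunc}, i.e.\ on the fractional porous medium machinery of \cite{PT2}), and then exploits the symmetry identity $\int_{B_R}\rho_\epsilon u_{\epsilon,R}^\alpha u_R\,\dd x=\int_{B_R}\rho\, u_R^\alpha u_{\epsilon,R}\,\dd x$ to deduce $\int_{\Rr^N}\rho\, u^\alpha\underline u^\alpha(u^{1-\alpha}-\underline u^{1-\alpha})\,\dd x\le C\epsilon$, whence $\rho u^\alpha=\rho\underline u^\alpha$ and the conclusion follows from Lemma \ref{27026}. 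You instead run the classical Brezis--Kamin/Krasnoselskii concavity argument directly on the integral representations: extending Remark \ref{rem1} to the competitor is legitimate (Lemma \ref{27026} only needs boundedness, decay to zero at infinity, and $N>2\sigma$, all available for $u$); the lower bound $\underline u(x)\ge c|x|^{\sigma-N}$ combined with the assumed decay $u(x)\le C|x|^{\sigma-\nu-N/r}$ and the inequality $\nu+N/r>N$ built into \eqref{ea59} makes $\lambda=\sup u/\underline u$ finite; and sublinearity gives $\lambda\le\lambda^\alpha$, hence $\lambda\le 1$. This bypasses both the perturbation and the entire parabolic apparatus of Section \ref{sec:uni}, and in fact proves Proposition \ref{02034} and Theorem \ref{tuniell} in one stroke without ever requiring $\rho>0$; what the paper's route buys instead is a further illustration of the elliptic--parabolic link emphasized in the introduction. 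Two small points you should tighten. First, the comparison $u_R\le u$ underlying minimality is \emph{not} a consequence of $s\mapsto\rho s^\alpha$ being nondecreasing (monotone nonlinearities alone do not yield comparison); the correct tool is the sublinear comparison principle, Lemma \ref{190215}, which requires both functions to be positive --- the same implicit positivity of the competitor that the paper also assumes. Second, the positivity of $\int_{B_{R_0}}\rho\,\underline u^\alpha\,\dd x$ deserves a word: it follows from $\underline u\ge u_{R_0}>0$ in $B_{R_0}$ by the monotone construction of $\underline u$, after which the integral representation upgrades this to $\underline u>0$ everywhere.
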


\begin{proof}
Let $h\in C^\infty(\Rr^N) \cap L^\infty(\Rr^N) \cap L^1(\Rr^N)$, $h> 0$, and define for any $\epsilon>0$,
\[
\rho_\epsilon := \rho + \epsilon h.
\]
By Lemma \ref{01031} there exists $u_\epsilon$ solving
\begin{equation}
\label{04037}
-(\Delta)^\frac{\sigma}{2}u_\epsilon = \rho_\epsilon\, u_\epsilon^\alpha \qquad\text{in }\Rr^N
\end{equation}
and verifying the following inequalities  in $\Rr^N$:
\begin{gather}
u_\epsilon(x) \leq C |x|^{\sigma - \nu - \frac{N}{r}}, \nonumber\\
u\leq u_\epsilon, \label{04036}
\end{gather}
for some $C>0$, with $\nu$ and $r$ as in Lemma \ref{lemmaRub2}.
Thanks to Proposition \ref{02034} such $u_\epsilon$ is the unique
solution of \eqref{04037}. Let $u_{\epsilon, R}$ and $u_R$ be the
positive solutions to
\begin{equation}
\label{04038}
\begin{cases}
(-\Delta)^\frac{\sigma}{2}u_{\epsilon, R} = \rho_\epsilon \, u_{\epsilon, R}^\alpha &\text{in } B_R\\
u_{\epsilon, R}=0 & \text{in }\partial B_R,
\end{cases}
\end{equation}
and
\begin{equation}
\label{04039}
\begin{cases}
(-\Delta)^\frac{\sigma}{2}u_{R} = \rho\, u_R^\alpha &\text{in } B_R\\
u_{R}=0 & \text{in }\partial B_R.
\end{cases}
\end{equation}
So, for any $\varphi\in C^\infty_0(B_R), \varphi =0$ on $\partial
B_R$,
\begin{align}
\int_{ B_R} \rho_\epsilon\, u_{\epsilon, R}^\alpha\, \varphi(x,0)
\,\dd x &=  \int_{B_R} (-\Delta)^{\frac{\sigma}4}\varphi
(-\Delta)^{\frac{\sigma}4} u_{\epsilon, R}\,\dd x\, \dd y; \label{04039}\\
\int_{ B_R} \rho\, u_{R}^\alpha\, \varphi(x) \,\dd x &= \int_{B_R}
(-\Delta)^{\frac{\sigma}4}\varphi (-\Delta)^{\frac{\sigma}4} u_{R}
\,\dd x\, . \label{040310}
\end{align}
It is easily seen that \eqref{04039} holds true with $\varphi=
u_R$, while  \eqref{040310} holds true with  $\varphi=
u_{\epsilon,R}$; so, we obtain:
\begin{multline}
\int_{ B_R} \rho_\epsilon\, u_{\epsilon, R}^\alpha\, u_R(x) \,\dd x =
   \int_{B_R} (-\Delta)^{\frac{\sigma}4} u_{\epsilon,R}(-\Delta)^{\frac{\sigma}4} u_{R} \,\dd x\,\\
=\int_{ B_R} \rho\, u_{R}^\alpha\, u_{\epsilon, R}(x) \,\dd x.
\end{multline}
Hence,
\begin{align*}
\int_{ B_R} \rho\, u_{\epsilon, R} u_R^\alpha (u_{\epsilon, R}^{1-\alpha} - u_R^{1-\alpha})\,\dd x
& = \int_{ B_R} \rho \bigl[ u_{\epsilon, R} \, u_R^\alpha  - u_{\epsilon, R}^{\alpha} \, u_R \bigr]\dd x\\
& = \int_{ B_R} \bigl[ \rho_\epsilon  \, u_{\epsilon, R}^\alpha \, u_R  - \rho \, u_{\epsilon, R}^{\alpha} \, u_R \bigr]\dd x\\
& = \int_{ B_R} (\rho_\epsilon - \rho) \, u_{\epsilon, R}^\alpha \, u_R \, \dd x\\
& \leq \int_{B_R} \epsilon\, h\, u_{\epsilon, R}^\alpha \, u_R \, \dd x \leq C\epsilon \|h\|_{L^1(\Rr^N)} \leq C\epsilon
\end{align*}
for some $C>0$ independent of $R$. Passing to the limit as $R\to \infty$ and taking into account \eqref{04036} we get
\begin{multline}
\label{040311}
\int_{\Rr^N} \rho\, u^\alpha\, \underline{u}^\alpha \, (u^{1-\alpha} - \underline{u}^{1-\alpha})\,\dd x \leq
\int_{\Rr^N} \rho\, u_\epsilon^\alpha\, \underline{u}^\alpha \, (u_\epsilon^{1-\alpha} - \underline{u}^{1-\alpha})\,\dd x \\
\leq \lim_{R\to \infty} \int_{ B_R} \rho\, u_{\epsilon, R} \, u_R^\alpha (u_{\epsilon, R}^{1-\alpha} - u_R^{1-\alpha})\,\dd x \leq C\epsilon.
\end{multline}
Since $u\geq \underline{u}$, by sending $\epsilon\to 0^+$ in \eqref{040311} we discover
\[
\int_{\Rr^N} \rho\, u^\alpha\, \underline{u}^\alpha \, (u^{1-\alpha} - \underline{u}^{1-\alpha})\,\dd x  =0.
\]
Hence $\rho u^\alpha = \rho \underline{u}^\alpha$ in $\Rr^N$, which implies
\[
(-\Delta)^\frac{\sigma}{2}(u - \underline{u}) = \rho\, (u^\alpha - \underline{u}^\alpha) = 0 \qquad\text{in }\Rr^N.
\]
By uniqueness of solutions for the linear problem (see Lemma \ref{27026}), we conclude that $u=\underline{u}$ in $\Rr^N$.
\end{proof}

\bibliographystyle{plain}

\begin{thebibliography}{999}

\bibitem{AH} D. R. Adams, L. I. Hedberg, ``Function Spaces and Potential Theory'', Grundlehren der Mathematischen Wissenschaften [Fundamental Principles of Mathematical Sciences], 314. Springer-Verlag, Berlin, 1996.


\bibitem{BCdPS} B. Barrios, E. Colorado, A. de Pablo, U. Sanchez, {\it On some critical problems for the fractional Laplace equation}, J. Diff. Eq. {\bf 252} (2012), 6133--6162\,.

\bibitem{BCdP} C. Br\"andle, E. Colorado, A. de Pablo, {\it A Concave-convex elliptic problem involving the fractional Laplacian}, preprint, arXiv:1006.4510v2.

\bibitem{BK} H. Brezis, S. Kamin, {\it Sublinear Elliptic Equations in $\Rr^N$}, Manuscripta Math.  {\bf 74} (1992), 87--106.

\bibitem{BO} H. Brezis, L. Oswald, {\it Remarks on sublinear elliptic equations,} Nonlin. Anal. {\bf 10} (1986), 55--64\,.

\bibitem{CS1} X. Cabré, Y. Sire, {\it Nonlinear equations for fractional Laplacians I: Regularity, maximum principles, and Hamiltonian
estimates}, Trans. Amer. Math. Soc. (to appear)\,.

\bibitem{CS2} X. Cabré, Y. Sire, {\it Nonlinear equations for fractional Laplacians II: existence, uniqueness, and qualitative properties of
solutions}, Annal. Inst. H. Poincare' AN (to appear)\,.

\bibitem{CT} X. Cabré, J. Tan, {\it Positive solutions of nonlinear problems involving the square root of the Laplacian}  Adv. in Math. {\bf 224} (2010), 2052--2093.

\bibitem{CafS} L. A. Caffarelli, L. Silvestre, {\it An extension problem related to the fractional
Laplacian}, Comm. Partial Diff. Eq. {\bf 32} (2007), 1245--1260.

\bibitem{CDDS} A. Capella, J. Davila, L. Dupaigne, Y. Sire, {\it Regularity of radial extremal solutions for some non local semilinear equations}, Comm. Part. Diff. Eq. {\bf 36} (2011), 1353--1384\,.

\bibitem{V1} A. de Pablo, F. Quiros, A. Rodrigues, J.L. Vazquez, {\it A fractional porous medium equation}, Adv. Math. {\bf 226} (2011), 1378-1409.

\bibitem{V2} A. de Pablo, F. Quiros, A. Rodrigues, J.L. Vazquez, {\it A general fractional porous medium equation}, Comm. Pure Appl. Math. {\bf 65} (2012), 1242-1284.

\bibitem{dBV} G. Di Blasio, B. Volzone, {\it Comparison and regularity results for the fractional Laplacian via symmetrization methods} J. Diff. Eq. {\bf 253} (2012), 2593--2615.

\bibitem{Ed} A.L. Edelson, {\it Asymptotic properties of semilinear equations}, Can. Math. Bull. {\bf 32} (1989), 83--118.

\bibitem{Egn} H. Egnell, {\it Asymptotic results for finite energy solutions of semi linear elliptic equations}, J. Diff. Eq. {\bf 98} (1992),  34--56.

\bibitem{KRV}  S. Kamin, G. Reyes, J. L. V\'azquez, \emph{Long time behavior for the inhomogeneous PME in a medium with rapidly decaying density}, Discrete Contin.\ Dyn.\ Syst.\ 26 (2010), 521--549.

\bibitem{Kras} M. Krasnolelskii, "Positive solutions of operator equations", Noordhoff (1964).

\bibitem{Nai} M. Naito, {\it A note on bounded positive entire solutions of semi linear elliptic equations}, Hirosh. Math. J. {\bf 14} (1984), 211--214.

\bibitem{PT1} F. Punzo, G. Terrone, {\it Well posedness for the Cauchy problem for a fractional porous medium equation with variable density in one space dimension}, Diff. Int. Eq. (to appear).

\bibitem{PT2} F. Punzo, G. Terrone, {\it On the Cauchy problem for a general fractional porous medium equation with variable density}, Submitted (2013).

\bibitem{Rad} V. Radulescu, "Qualitative Analysis of Nonlinear Partial Differential Equations: Monotonicity, Analytic and Variational Methods," Cont. Math. Appl., {\bf 6}, Hindawi Publ. Corp. (2008).

\bibitem{Rubin} B. S. Rubin, \emph{One-dimensional representation, inversion, and certain properties of the Riesz potentials of radial functions}, Math. notes Acad. Sc. U.S.S.R. {\bf 34} (1983), 751--757. Translated from {Matem. Zametki}, {\bf 34} (1983), 521--533.

\bibitem{ST} D. Siegel, E. Talvila {\it Pointwise growth estimates of the Riesz potential}, in "Differential equations and dynamical systems" (Waterloo, ON, 1997), Dynam. Cont. Discr. Imp. Sys. {\bf 5} (1999) 185--194.

\bibitem{Tan} J. Tan, {\it The Brezis-Nirenberg type problem involving the square root of the Laplacian}, Calc. Var. Part. Diff. Eq. {\bf 42} (2011) 21--41\,.

\end{thebibliography}
\addcontentsline{toc}{section}{References}

\end{document}